\documentclass[12pt]{article}

\usepackage{amssymb, amsmath, amsthm, graphicx, comment}
\usepackage[left=1in,top=1in,right=1in]{geometry}
\usepackage[color=Orange!50!white, textwidth=22mm]{todonotes}

\usepackage{caption}
\usepackage{subcaption}
\usepackage[percent]{overpic}
\usepackage{pgfplots}
\pgfplotsset{compat=1.15}
\usepackage{mathrsfs}
\usetikzlibrary{arrows}
\usepackage{tikz}
\usetikzlibrary{calc}

\theoremstyle{plain}
      \newtheorem{theorem}{Theorem}
      \newtheorem{lemma}[theorem]{Lemma}

      \newtheorem{problem}[theorem]{Problem}
      
      \newtheorem{corollary}[theorem]{Corollary}

\theoremstyle{definition}
      \newtheorem{definition}{Definition}

\theoremstyle{remark}

	\newcommand{\ZZ}{{\mathbb Z}}

\def\diam{\mathrm{diam}}

\long\def\comment#1{}

\title{Cutting a convex body into fat parts and approximating  Euclidean distance by graph distances }
\author{J\'anos Pach\thanks{Alfr\'ed R\'enyi Institute of Mathematics, Budapest, Hungary. Research partially supported by ERC Advanced Grant `GeoScape.' Email: \texttt{pach@renyi.hu}.}
\and G\'abor Tardos\thanks{Alfr\'ed R\'enyi Institute of Mathematics, Budapest, Hungary. Research partially supported by ERC Advanced Grant `ERMiD.' Email: \texttt{tardos@renyi.hu}.}}

\begin{document}
\date{}

\maketitle

\begin{abstract}
Can one construct a graph $G$ on the set of integer points $\ZZ^2$ in the plane such that the length of the shortest path between any two vertices of $G$ differs from their Euclidean distance by at most an absolute constant? This question of Benjamini, Erd\H os, Kleiner, Kozma, Schramm, and the first-named author has been open for a long time. We give an affirmative answer to a weaker form of this question, based on the following geometric statement, which is of independent interest. There exists a constant $c>0$ such that for every $i=1,2,\ldots,$ every $\rho$-fat plane convex set $S$ can be cut into $2^i$ convex pieces of equal area, each of which is at least $c\rho$-fat. (A convex set is \emph{$\rho$-fat} if the ratio of its inradius to its circumradius is at least $\rho$.)

We prove that there exists an (unweighted) spanning subgraph $G$ of an enlarged copy of $\ZZ^2$ such that, for every pair of vertices at Euclidean distance $d$, their shortest-path distance in $G$ lies between $d-O(1)$ and $d+o(d^{5/6})$. The same bound can be achieved by a planar graph with vertex set $\ZZ^2$, in which every edge joins two vertices at Euclidean distance at most 2.
\end{abstract}

\section{Introduction}

In the mathematical and computer science literature, there are several competing quantities to measure how ``round'' a convex body $S$ is: fatness~\cite{vdSHO93, vdSO94, deBKV02, Gr08}, chunkiness~\cite{Ci78}, aspect ratio~\cite{Sh14}, eccentricity~\cite{Le59}, etc. For a comparison of several of these parameters, see also~\cite{deBKV02,Gr08}.

In Section~\ref{sec2}, we introduce and analyze a new measure. We use $d(p,q)$ to denote the Euclidean distance between $p$ and $q$.

\begin{definition}\label{roundness}
Let $S$ be a compact convex set in the plane consisting of more than one point. For a pair of distinct points $p,q\in S$, let $rs$ be the segment that is the intersection of $S$ with the orthogonal bisector of the segment $pq$, and let $C(S,p,q):=\frac{d(r,s)}{d(p,q)}$.

The \emph{roundness} of $S$ is defined as
$$C(S):=\inf_{p,q}C(S,p,q).$$
\end{definition}

The roundness of a disk is 1, and the roundness of a square is also 1. Observe that this parameter is closely related to a classical measure: the ratio of the inradius $r(S)$ to the circumradius $R(S)$. It is not hard to prove that for any plane convex body $S$, we have $0.28C(S)\le r(S)/R(S)\le2C(S)$. In particular, the more elongated $S$ is, the lower its roundness.

It seems plausible that a ``round'' convex body in the plane can be cut into many convex pieces of equal area that are comparably round. Yet the proof of this fact is far from obvious.

We start Section~\ref{sec3} by establishing the following result.

\begin{theorem}\label{fat}
Given any natural number $k$, every plane convex body $S$ can be cut into $2^k$ convex pieces of equal area whose roundness is at least $\min(C(S),1/12)$.
\end{theorem}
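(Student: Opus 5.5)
The plan is to reduce Theorem~\ref{fat} to a single halving step and then iterate. The key lemma to prove is the following. \emph{For every plane convex body $S$ there is a line $\ell$ that cuts $S$ into two convex pieces of equal area, each of roundness at least $\min(C(S),1/12)$.} Given this, the theorem follows by induction on $k$. Apply the lemma to $S$. Then apply it to each of the two halves, whose roundness is at least $\min(C(S),1/12)$; since $\min(\min(C(S),1/12),1/12)=\min(C(S),1/12)$, the bound does not decay. After $k$ rounds we have $2^k$ convex pieces of equal area $|S|/2^k$, each with the required roundness.

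For the lemma, I would fix a diameter $ab$ of $S$ and take $\ell$ to be the unique halving line orthogonal to $ab$. This cuts $S$ across its longest direction, so each half $T$ is ``less elongated'' than $S$. First I will collect coarse shape information from $C:=C(S)$. Applying the definition to the pair $a,b$ gives a chord of length at least $C\,d(a,b)$ orthogonal to $ab$ through its midpoint. Convexity then shows that every chord of $S$ orthogonal to $ab$ whose foot lies in the middle portion of $ab$ has length $\gtrsim C\,d(a,b)$. The area-halving condition, combined with this chord-length profile, keeps $\ell$ at distance $\Omega(d(a,b))$ from both $a$ and $b$. Hence each half $T$ has diameter at most $d(a,b)$, contains a long chord on $\ell$, and has extent $\Omega(d(a,b))$ in the $ab$ direction.

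Next, for $p,q\in T$, let $m$ be the bisector of $pq$ and $rs=m\cap S$. Then $m\cap T$ is either all of $rs$ or the part of $rs$ on one side of $\ell$. If $m$ does not cross $\ell$ inside $S$, the chord is unchanged and the bound $C\,d(p,q)$ is inherited. Otherwise I split into two cases according to the angle between $m$ and $\ell$.

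\textbf{Angle not too small.} When $m$ makes an angle bounded away from $0$ with $\ell$, so that $m$ is roughly parallel to $ab$, the part of $m$ inside $T$ runs from $\ell$ to the far boundary of $T$. Its length is comparable to the $ab$-extent of $T$, which is $\Omega(d(a,b))\ge\Omega(d(p,q))$.

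\textbf{Angle small.} When $m$ is nearly parallel to $\ell$, $pq$ is nearly parallel to $ab$. Then the midpoint of $pq$ lies in $T$ at some distance from $\ell$, and I compare $m\cap T$ with the long chord of $T$ on $\ell$ using convexity (a trapezoid/triangle argument between the chord on $\ell$ and the apex side).

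In each case I obtain a lower bound of the form $\min(C,c_0)\,d(p,q)$, and I would tune the estimates so that $c_0=1/12$.

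The main obstacle is the case where $m$ crosses $\ell$ at a small angle near the boundary of $S$. There the truncated chord can be much shorter than $rs$, and no single convexity estimate obviously gives the constant. If the fixed choice of $\ell$ orthogonal to the diameter does not work out with the constant $1/12$, my fallback is a continuity argument. Rotate the halving line through all directions and choose the direction that maximizes the minimum roundness of the two halves (or, alternatively, the halving chord of minimum length). Then use extremality to rule out a short truncated bisector chord, since such a chord would indicate a rotation that improves the cut.
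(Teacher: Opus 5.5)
Your global strategy is the paper's: cut each piece by the area-halving line orthogonal to a diameter and iterate, using that $\min(C(S),1/12)$ is stable under iteration. In the paper the one-step statement is Lemma~\ref{sections}(1) with $\mu(S')/\mu(S)=1/2$. That one-step lemma is the entire content of the theorem, and your proposal does not prove it.

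The case where the bisector $m$ of $pq$ does not cross $\ell$ inside $S$ is fine, since then $m\cap T=m\cap S$. The remaining case is $m\cap T=rs$ with $r\in\ell$ and $s\in\partial S$, and there your ``angle not too small'' step is false. The chord $m\cap T$ need not reach the far side of $T$, whatever the angle between $m$ and $\ell$. Take the unit disk, $\ell$ the $y$-axis, $T$ the right half, and $m$ the line $y=x+0.9$; this is the bisector of $p=(0,1)$ and $q=(0.1,0.9)$, both in $T$. Then $m$ meets $\ell$ at $45^\circ$, yet $m\cap T$ has length less than $0.14$ while $T$ has width $1$. The ratio survives only because $p$ and $q$ are then also trapped near the end of $\ell\cap S$, and a comparison with $\Omega(d(a,b))$ cannot detect this. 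You leave the small-angle case open yourself. The fallback (an extremal halving direction) is not an argument: nothing indicates it yields $1/12$, or any absolute constant.

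The missing idea is the paper's treatment of exactly this case (Case~2 in the proof of Lemma~\ref{sections}). It is organized by the angle $\alpha=\angle rps$, not by the angle between $m$ and $\ell$, and has three ingredients.

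\emph{Angle bound.} Let $t$ be the midpoint of $pq$. Then $d(r,s)=d(p,t)(\tan\angle rpt+\tan\angle tps)\ge d(p,q)\,\alpha/2$, so $C(T,p,q)\ge\alpha/2$ however short $rs$ is.

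\emph{Localization by the diameter.} Let $T$ lie on the side of $b$, let $L_b$ be the line through $b$ orthogonal to $ab$, and let $\gamma=\angle abp$. Then $d(a,p)\le d(a,b)$ forces $d(p,b)\le 2d(a,b)\cos\gamma$, so $p$ is within $2d(a,b)\cos^2\gamma$ of $L_b$. Reflect and relabel so that $s$ is on the upper boundary arc and $p$ lies above the line $rs$. Then $s$ lies between $p$ and $L_b$ in the $ab$ direction, and not below the segment $pb\subset S$. This gives $\alpha+\gamma\ge\pi/2$, hence $\cos^2\gamma\le\alpha^2$, so $s$ is within $2d(a,b)\alpha^2$ of $L_b$.

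\emph{Width of the half.} The line $\ell$ is at distance at least $d(a,b)/4$ from $L_b$, independently of $C(S)$. Let $w$ be the width of $T$ in the direction $ab$ and $d'$ its longest chord orthogonal to $ab$. Then $\mu(S)/2\le wd'$ and $\mu(S)\ge d(a,b)d'/2$.

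Since $r\in\ell$, projecting onto $ab$ gives $d(r,s)\ge d(a,b)(1/4-2\alpha^2)$. Together with $d(p,q)\le d(a,b)$ this yields $C(T,p,q)\ge\max(\alpha/2,\,1/4-2\alpha^2)\ge 1/12$, which is the bound you need in the case you could not close.
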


Given a graph $H$ and two vertices $u,v\in V(H)$, let $d_H(u,v)$ denote their \emph{graph distance}, that is, the minimum number of edges along a path connecting $u$ and $v$ in $H$. We will apply Theorem~\ref{fat} and its proof to study an old problem in metric geometry: Can one define a graph $G$ whose vertex set is ${\ZZ}^2$, the set of integer points in the plane, and for which the graph distance and the Euclidean distance are almost the same? Some historical remarks can be found in the second half of this introduction.

At the end of Section~\ref{sec3}, based on the recursive proof of Theorem~\ref{fat}, we will build a finite connected subdivision graph $H$ whose vertices lie in a bounded region of the plane.
\smallskip

In Section~\ref{sec4}, we will compare the graph distance and the Euclidean distance in $H$. In Section~\ref{sec5}, using a limiting argument, we construct a single infinite graph $H$, for which the graph distance and the Euclidean distance are asymptotically the same.

\begin{theorem}\label{generalizedpinwheel}
There exists an infinite set of points $P$ in the plane and a graph $H$ with $V(H)=P$ that satisfy the following properties.
\begin{enumerate}
\item The distance between any two distinct vertices of $P$ is at least $1/3$.
\item For any point $x$ of the plane, there is a vertex $p\in P$ within distance at most $1/3$.
\item For any two vertices $p,q\in P$, we have
$$d(p,q)\le d_H(p,q)=d(p,q)+O\left(\frac{d(p,q)}{\log^{2/3}(d(p,q))}\right).$$
\end{enumerate}
\end{theorem}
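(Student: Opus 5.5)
We build $H$ as a hierarchical ``road network'' coming from iterated fat partitions, in the spirit of the pinwheel tiling. The construction is first done on a large square and then extended to the whole plane by a limiting argument.

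\textbf{Step 1: hierarchical partition.} Start with a square $Q$ of area $4^n$. Apply the recursive halving procedure from the proof of Theorem~\ref{fat} for $2n$ levels. This gives a binary tree of convex cells. The cells at depth $j$ have area $4^{n-j/2}$, and by Theorem~\ref{fat} every cell has roundness at least $1/12$. Hence every cell has diameter $\Theta$ of the square root of its area and contains a disk of comparable radius. The leaves have area $1$.

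\textbf{Step 2: vertices and edges.} We place vertices along the boundaries of the cells and sprinkle them inside the leaves. The placement must respect the separation and covering constraints of properties 1 and 2, so we use spacing between $1/3$ and $2/3$. We then connect consecutive vertices along each cut segment, so that a cut of length $\ell$ becomes a path of about $\ell/\delta$ edges. Here $\delta$ is the spacing, rescaled so that each edge corresponds to unit length. Since every cut is a straight segment, the graph distance along a segment equals its Euclidean length up to $O(1)$, and the lower bound $d\le d_H$ holds because every edge has length at most $1$. The delicate part is reconciling the $1/3$ density conditions with unit edge length. I would use edges of length exactly $1$ along segments, and add extra points at distance at least $1/3$ that are attached to the network.

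\textbf{Step 3: the upper bound.} Fix $p,q$ at distance $d$. We route from $p$ to $q$ along cut segments, descending the hierarchy. Let $D$ be the smallest cell containing both points, chosen at the level whose cells have diameter about $d$. The straight segment $pq$ is approximated at scale $s_j$ by a polygonal path along depth-$j$ cut segments. Fatness implies that each cell has a cut segment passing near any chord, so each refinement step is feasible. Summing the detour costs over the levels should give an excess of $O(d/\log^{2/3} d)$, by choosing the number of levels used adaptively.

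The per-level excess is the main obstacle. Replacing a straight segment by a path along cuts whose directions are arbitrary could cost a constant factor. To control this, I would modify the partition so that cut directions within each cell are rich: halving cuts can be chosen in any direction, since any direction admits an equal-area bisector. At level $j$ we cycle through about $m$ directions, which makes the angular error $O(1/m)$ and the length loss $O(1/m^2)$ per scale. Balancing the $m$ direction classes against about $\log d$ scales, with an error of $1/m^2$ per unit length versus $m$ levels of lag, should yield the $\log^{-2/3}$ rate. This requires checking that the roundness stays bounded while the directions are forced, which is where the argument of Theorem~\ref{fat} must be strengthened.

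\textbf{Step 4: the infinite graph.} Finally, we let $n\to\infty$. The squares are nested, so each one is a quarter-cell of the next; alternatively we can use a compactness argument, taking a subsequential limit of the finite graphs restricted to bounded windows. The bounds in Step 3 are uniform in $n$, so they survive the limit.
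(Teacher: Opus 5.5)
\textbf{There is a genuine gap: you flag the central difficulty yourself and leave it open.} You need one partition in which every cell stays uniformly fat \emph{and} the descendants of every cell carry cuts in evenly spread, prescribed directions. Step~1 gets fatness from the procedure of Theorem~\ref{fat}, but its cuts are perpendicular to diameters, so their directions are uncontrolled. Once a bisecting cut has a forced direction, the only general guarantee is $C(S')\ge C(S)/4$ (Lemma~\ref{longcut}/1). Lemma~\ref{sections}/1 only gives $C(S')\ge\min(C(S),\mu(S')/(6\mu(S)))$, so it never recovers a loss, and the losses compound. For example, bisecting repeatedly in nearly the same direction yields slabs whose roundness tends to $0$; cycling through $m$ directions in order does exactly this for large $m$.

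The missing ingredient in the paper is a roundness \emph{boost}. Slice each region into $40$ equal-area pieces perpendicular to its diameter and keep $38$ of them as subregions. Bisect in the prescribed direction only the union $T$ of the two slices at the midpoint of the diameter. Lemma~\ref{sections}/2 shows this central slab is rounder than its parent ($C(T)>1/125$ when $C(S)>1/500$), which pays for the factor $4$ of the forced cut (Lemma~\ref{kerek}). The digit-reversed angles $\pi\sum_i u_i40^{i-k-1}$ then make the central cuts of the $40^k$ depth-$k$ descendants of any region equally spaced in direction.

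Step~2 also fails as stated. Unit-spaced points on all cut segments violate the $1/3$-separation near junctions and on the many short low-level cuts, and you never say how a path passes from one segment to another. The paper puts points only on the middle half of the central cut of each region of area $\ge10^{10}$; by Lemma~\ref{longcut}/3 these are pairwise at distance $\ge1$ (Lemma~\ref{P'}). It then completes them greedily to a maximal $1/3$-separated set and lets $H$ join all pairs at distance $\le1$, so connectivity comes for free (Lemma~\ref{base2}).

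\textbf{Step~3 is a heuristic, not an argument.} The trade-off ``$1/m^2$ per unit length versus $m$ levels of lag'' is not derived. A polygonal path along depth-$j$ cuts also leaves unaccounted the cost of travelling between consecutive cut segments, which are short disjoint pieces rather than continuous roads. The paper controls exactly this cost by bootstrapping (Lemma~\ref{step2}):
\begin{itemize}
\item Assume $d_H\le(1+\varepsilon)d$ for all pairs at distance $\ge m$.
\item For $x,y$ at distance $d\ge\max(3m,10^{16}\varepsilon^{-3/2})$, take a region $S$ containing the midpoint of $xy$ with $\mu(S)=\Theta(\varepsilon^{5/2}d^2)$.
\item By Lemma~\ref{P'}/2 with $40^k\approx\varepsilon^{-1/2}$, find a segment $pq\subset S$ of length $\Omega(\varepsilon^{3/2}d)$, within angle $O(\varepsilon^{1/2})$ of $xy$, with $d_H(p,q)=d(p,q)$.
\item Route $x\to p$ and $q\to y$ by the hypothesis. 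This saves about $\varepsilon\,d(p,q)=\Omega(\varepsilon^{5/2}d)$, which beats both the angular loss and the detour loss $O(\mu(S)/d)$.
\end{itemize}
Hence $\varepsilon'=\varepsilon-\Theta(\varepsilon^{5/2})$, while the threshold only triples. Iterating from $(0,2)$ gives $\varepsilon_i=\Theta(i^{-2/3})$ at scale $m_i=\Theta(3^i)$, which is where $\log^{-2/3}d$ comes from.

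Since the $(m_i,\varepsilon_i)$ do not depend on the initial body, the compactness version of your Step~4 then works as in the paper. The nested-squares version does not, because the halving procedure cuts a square into triangles, not smaller squares.
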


Theorem~\ref{generalizedpinwheel} easily implies the following.

\begin{corollary}\label{inwheelongrid}
There exists a graph $H'$ on the vertex set ${\ZZ}^2$ such that for any two integer points $p$ and $q$, we have $$d_{H'}(p,q)=d(p,q)+O\left(\frac{d(p,q)}{\log^{2/3}(d(p,q))}\right),$$
as $d(p,q)$ tends to infinity.
\end{corollary}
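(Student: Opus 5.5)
We derive the corollary from Theorem~\ref{generalizedpinwheel} by pulling the graph $H$ back to the integer lattice. Take $P$ and $H$ as in that theorem. For each $x\in\ZZ^2$ choose a nearest vertex $\pi(x)\in P$; by property~2, $d(x,\pi(x))\le 1/3$. Points of $P$ are $1/3$-separated by property~1, so by area packing each $p\in P$ has at most a bounded number of nearby points of $P$. Likewise, $\pi^{-1}(p)$ lies in a disk of radius $1/3$ and so contains at most one integer point. (Two integer points are at distance $\ge 1$, while points of a disk of radius $1/3$ are within $2/3$ of each other.) Hence $\pi$ is injective. Our goal is an edge set on $\ZZ^2$ that simulates each edge of $H$ by a path of length essentially equal to the Euclidean edge length.

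The key issue is that an edge $pq$ of $H$ contributes $1$ to $d_H$, but its Euclidean length is not specified. Property~3, applied to adjacent $p,q$, gives $d(p,q)\le 1$. Since $p\neq q$, we also have $d(p,q)\ge 1/3$. I would then argue that we may assume every edge of $H$ has length close to $1$. Indeed, by property~3 a geodesic $p=p_0,\dots,p_m=q$ satisfies $m=d_H(p,q)\ge d(p,q)$ and $m\le d(p,q)+o(d(p,q))$. Its Euclidean length $\sum d(p_{i-1},p_i)$ is at least $d(p,q)$ and at most $m$. So geodesic edges have total "deficit" $\sum(1-d(p_{i-1},p_i))=O(d/\log^{2/3}d)$.

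Now define $H'$. Join integer points $x\ne y$ whenever there is a path $\pi(x)=p_0,\dots,p_k=\pi(y)$ in $H$ with $k\ge1$ and $\lceil d(x,y)\rceil$ small compared with $k$. The clean version of this is awkward, so I would use the following concrete substitute. Fix a large constant $L$. Put an edge $xy$ in $H'$ iff $d_H(\pi(x),\pi(y))\le L$ and $d(x,y)\le 1.01\,d_H(\pi(x),\pi(y))$… However, single edges of $H'$ count as $1$, so a more faithful choice is needed. I would instead take $H'$ to be the graph on $\ZZ^2$ where $x\sim y$ iff $\pi(x)$ and $\pi(y)$ are equal or adjacent in $H$. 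Then:

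\begin{itemize}
\item[(a)] $d_{H'}(x,y)\le d_H(\pi(x),\pi(y))+O(1)$. The $O(1)$ accounts for the endpoints; for interior vertices of an $H$-geodesic not in $\pi(\ZZ^2)$, we reroute through an integer point at distance $\le 1/3+1/\sqrt2$ whose image is adjacent. This needs each $p\in P$ to have a lattice point $x$ with $\pi(x)$ equal or $H$-adjacent to $p$, and this rerouting is the delicate step.
\item[(b)] By the triangle inequality, $d_H(\pi(x),\pi(y))\le d(x,y)+2/3+O(d/\log^{2/3}d)$.
\end{itemize}

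This gives the upper bound. For the lower bound, an edge $xy$ of $H'$ has $d(x,y)\le 1+2/3$. That is too weak: it only gives $d_{H'}\ge 0.6\,d$, not $d-o(d)$.

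The main obstacle is therefore the lower bound, i.e.\ making sure that lattice adjacencies never shortcut Euclidean distance. My fix is to replace each $H$-edge $pq$ by a lattice path of length $\lceil d(\pi^{-1}\text{-near points})\rceil$ realized by straight "king-move free" routing. Alternatively, following the theorem's construction, rescale $P$ by a factor $3$ before snapping so that separation is at least $1$. Then every $H$-edge has length between $1$ and $3$, and we replace it by $\lceil\cdot\rceil$ unit-ish lattice steps. Lengths then agree up to an additive constant per edge, which is controlled only if edges are long; so I would take the scale factor $\lambda\to\infty$ slowly, with $\lambda$ comparable to $\log^{1/3}d$, balancing the per-edge rounding error $d/\lambda$ against the theorem's error at scale $\lambda$. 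Checking this balance still gives $O(d/\log^{2/3}d)$ (or at worst a slightly weaker $o(d)$) is the computation I expect to be hardest.
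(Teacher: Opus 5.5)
\textbf{There is a genuine gap: the proposal never reaches a working construction.} The obstacle you anticipate comes from insisting that edges of $H'$ be lattice steps whose number tracks Euclidean edge lengths; it is not inherent to the problem.

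The one concrete graph you define ($x\sim y$ iff $\pi(x),\pi(y)$ are equal or $H$-adjacent) fails the lower bound, as you note. The rerouting in step (a) is also unjustified: the image of the lattice point you reroute through is only guaranteed to lie within $1/\sqrt2+1/3>1$ of $p$.

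Your repairs do not close the gap.
\begin{itemize}
\item If each $H$-edge is realized as a path of geometric lattice steps, paths of different edges must share grid vertices and cross, which creates shortcuts. You also never say what to do with lattice points that lie on no path.
\item Taking the scale $\lambda$ comparable to $\log^{1/3}d$ is not allowed. $H'$ is a single graph that must serve every pair $p,q$ simultaneously, so the scale cannot depend on $d(p,q)$.
\item With a fixed $\lambda$ and an $O(1)$ loss per edge, your own accounting gives an error of order $d/\lambda$, which is not even $o(d)$.
\end{itemize}

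The missing idea is that $H'$ is an abstract graph whose vertex set happens to be $\ZZ^2$. The corollary places no restriction on its edges, so the locations of auxiliary vertices are irrelevant, and there is no per-edge rounding to control.

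Blow $P$ up by a fixed constant $N$ (odd; the paper takes $N=501$) and replace every edge of $H$ by a path of exactly $N$ edges. For $x,y\in P$ the new graph distance is exactly $N d_H(x,y)$, while $d(Nx,Ny)=Nd(x,y)$. So Theorem~\ref{generalizedpinwheel}, including $d_H\ge d$ for the lower bound, transfers verbatim. Your $\lceil\cdot\rceil$ variant would also work at a fixed large scale $N$, since $Nd(p,q)\le\lceil Nd(p,q)\rceil\le N$ for every edge $pq$ of $H$.

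To place the subdivided graph injectively on $\ZZ^2$, proceed as follows.
\begin{itemize}
\item $H$ has maximum degree at most $48$, because the neighbors of $x$ lie within distance $1$ and are $1/3$-separated.
\item Hence each $x\in P$ owns at most $24(N-1)+1$ vertices of the subdivided graph, namely those within graph distance $(N-1)/2$ of it.
\item For $N$ large, the open disk of radius $N/6$ about $Nx$ contains more lattice points than that, and these disks are pairwise disjoint.
\item Attach every unused lattice point $z$ by a single pendant edge to the image of some $x\in P$ with $d(z/N,x)\le 1/3$. Degree-one vertices cannot create shortcuts.
\end{itemize}

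Now every lattice point is within $O(1)$, in both graph and Euclidean distance, of the image $f(x)$ of some $x\in P$. Also $d(f(x),f(y))=Nd(x,y)+O(1)$. So the only discretization error is an additive $O(1)$ per pair, and the $O(d/\log^{2/3}d)$ bound follows immediately.
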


\begin{proof}[Proof of Corollary~\ref{inwheelongrid}]
To construct $H'$, we take a point set $P$ described in the theorem, blow it up by a sufficiently large constant factor, and then embed it in the grid. For a precise argument, we first bound the maximum degree of the graph $H$ from the theorem. The neighbors of a vertex $x\in P$ in the graph $H$ are all within distance $1$ from $x$, but are at distance at least $1/3$ from each other. Therefore, the open disks of radius $1/6$ centered at the neighbors of $x$ are pairwise disjoint and are contained in the disk of radius $7/6$ centered at $x$. Comparing areas, we obtain
$$
\deg_H(x)<\frac{\pi(7/6)^2}{\pi(1/6)^2}=49.
$$
Thus, the maximum degree of $H$ is at most $48$.

To obtain $H''$, replace each edge of $H$ by a path of length $501$. The new graph $H''$ contains the vertices in $P$ and also new degree $2$ vertices on the connecting paths. Every vertex $v$ of $H''$ is within $H''$-distance at most $250$ from a unique vertex $x\in P$.

Now we injectively map the vertices $v$ of $H''$ to points $f(v)$ of the grid $\ZZ^2$ in such a way that, if $x\in P$ is the unique vertex satisfying
$d_{H''}(v,x)\leq 250,$ then $d(f(v),501x)<501/6.$ This is possible because

(i) every vertex $v$ of $H''$ is associated with exactly one vertex $x\in P$;

(ii) there are at most $48\cdot250+1=12001$
vertices $v$ associated with any fixed point $x\in P$;

(iii) the open disks of radius $501/6$ around the points in $501P$ are pairwise disjoint, since distinct points of $P$ are at distance at least $1/3$ from each other; and

(iv) every open disk of radius $501/6$ contains more than $12001$ grid points. Indeed, such a disk contains an open axis-parallel square of side length $118$, because $59\sqrt{2}<501/6.$

Every open interval of length $118$ contains at least $117$ integers, so this square contains at least
$117^2=13689>12001$ points of $\ZZ^2$.

Finally, we construct $H'$ on the vertex set $\ZZ^2$ by connecting $f(v)$ and $f(w)$ for every edge $vw$ of $H''$, and adding a single additional edge for every grid point $z$ \emph{not} of the form $f(v)$. For such a point $z\in\ZZ^2$, we select $x\in P$ within distance $1/3$ of $z/501$ and add the edge $zf(x)$. The vertices of $\ZZ^2$ that are not of the form $f(v)$ have degree $1$, and therefore they cannot create shortcuts between vertices in $f(V(H''))$. It follows that, for any $x,y\in P$,
$$
d_{H'}(f(x),f(y))=501d_H(x,y).
$$
Furthermore,
$$
d(f(x),f(y))=501d(x,y)+O(1).
$$
For every $z\in\ZZ^2$, there exists $x\in P$ such that
$d(z,f(x))=O(1)$ and $d_{H'}(z,f(x))=O(1).$ Indeed, if $z$ is not of the form $f(v)$, this follows from the definition of the additional edge incident to $z$. If $z=f(v)$, take the unique $x\in P$ satisfying $d_{H''}(v,x)\leq250$.

Using part~3 of the theorem and increasing the implicit constant, if necessary, to cover bounded distances, we obtain, for $d(z,z')\geq 2$,
$$d_{H'}(z,z')=d(z,z')+O\left(\frac{d(z,z')}{\log^{2/3}(d(z,z'))}\right).$$
This proves the corollary.
\end{proof}

\comment{
To construct $H'$, we take a point set $P$ described in the theorem blow it up by a large enough and then embed it in the grid. For a precise argument, we first bound the maximum degree of the graph $H$ from the theorem: the neighbors of a vertex $x\in P$ in the graph $H$ are all within distance $1$ from $x$ but at distance at least 1/3 from each other. This bounds the maximal degree of $H$ to $48$. 
To obtain $H''$, replace each edge of $H$ by a path of length $201$. The new graph $H''$ contains the vertices in $P$ and also new degree $2$ vertices on the connecting paths. Now we injectively map the vertices $v$ of $H''$ to points $f(v)$ of the grid $\ZZ^2$ such that if $d_{H'}(v,x)\le100$, then $d(f(v),201x)<201/6$. This is possible, because (i) there is exactly one $x\in P$ within distance $100$ from every vertex of $H''$, (ii) there are at most $48\cdot100+1$ vertices $v$ within $H''$-distance $100$ from any point $x\in P$, (iii) the open disks of radii $201/6$ around points in $201P$ are pairwise disjoint and, (iv) any open disk of radius $201/6$ contains at least $4801$ grid points. Finally, we construct $H'$ on the vertex set $\ZZ^2$ by connecting $f(v)$ and $f(w)$ for every edge $vw$ of $H''$ and adding a single additional edge for every grid point $z$ \emph{not} of the form $f(v)$. For such a point $z\in\ZZ^2$ we select $x\in P$ within distance $1/3$ of $z/201$ and add the edge $zf(x)$. The claimed bound on $d_{H'}(x,y)$ follows from the similar bound for $d_H$ in item~3 of the theorem and the facts that for $x,y\in P$ (i) $d_{H'}(f(x),f(y))=201d_H(x,y)$, (ii) $d(f(x),f(y))=201d(x,y)+O(1)$, and (iii) for each $z\in\ZZ^2$ there exists $x\in P$ with $d(z,f(x))=O(1)$ and $d_{H'}(z,f(x))=O(1)$.}


The order of magnitude in the error term is not optimal. In the last section of this paper, we modify an idea of Cai \emph{et al.}~\cite{CaC26} concerning a closely related problem, the so-called ``Squishy Grid Problem," to obtain the following.

\begin{theorem}\label{best}
There is a spanning subgraph $H$ of an enlarged square grid (in particular, $V(H)=\frac32{\ZZ}^2$, $E(H)\subset\left\{\{x,y\}\in\binom{V(H)}2\mid d(x,y)=3/2\right\}$) such that for any two vertices $p$ and $q$, we have
$$d(p,q)-O(1)\le d_H(p,q)=d(p,q)+o(d(p,q)^{5/6}).$$
\end{theorem}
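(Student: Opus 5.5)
The plan is to adapt the ``squishy grid'' construction of Cai \emph{et al.}~\cite{CaC26}: start from the full grid $\frac32\ZZ^2$ and delete edges so that the graph becomes slow locally, then keep a multiscale system of ``roads'' along which travel is at almost exactly Euclidean speed. The scaling factor $3/2$ gives the needed slack. A monotone staircase with $k$ edges covers $L^1$-distance $3k/2$. So in every direction $\theta$ a staircase is slightly \emph{too fast}: it needs only about $\frac23(|\cos\theta|+|\sin\theta|)\ell$ edges for Euclidean length $\ell$, and $\frac23(|\cos\theta|+|\sin\theta|)\le \frac{2\sqrt2}{3}<1$. We can therefore pad every road with small detours so that it uses exactly $\ell+O(1)$ edges over Euclidean length $\ell$. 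The deletions are designed as a maze whose bulk never moves faster than speed $1$.

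\textbf{Step 1 (the lower bound $d_H\ge d-O(1)$).} I would organise the construction around the following invariant. For every unit vector $u$ and every path $v_0v_1\dots v_k$ in $H$,
\[ \langle u, v_k-v_0\rangle\le k+O(1), \]
with the constant independent of $u$. For each individual road this holds by the padding. For the maze filling the complement, I would use grid-line segments interrupted by forced zigzags, so that any local path advances at most one unit per edge. The real issue is that combining pieces does not accumulate error. To control this, I would let roads meet only at designated junction gadgets. I would then verify the invariant via a potential argument: assign to each vertex an ``offset'' of size $O(1)$ relative to $\langle u,x\rangle$ such that every edge increases $\langle u,x\rangle+\text{offset}$ by at most $1$.

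\textbf{Step 2 (the hierarchy of roads).} For scales $L_j=2^j$, I would use directions spaced by an angle $\delta_j$, with parallel roads in each direction at spacing $s_j$. Each road is a padded digital staircase approximating a line. To go from $p$ to $q$ at distance $d$, one moves from $p$ to a nearby road of the right scale, descending and ascending through the scales, and then follows the road whose direction is within $\delta_j$ of $pq$. The error has three sources. The angular discretisation costs $O(d\delta^2)$. Reaching a road costs $O(s)$, plus the corresponding cost at lower scales. Crossings and junction gadgets cost $O(1)$ each. If roads of density determined by $s$ and $\delta$ cross a length-$d$ route about $d/(s\delta)$ times, that contributes another term. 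Balancing $d\delta^2$, $s$ and the crossing term, and summing the geometric series over scales, should give $O(d^{5/6})$. Making the parameters decay slowly gives $o(d^{5/6})$.

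\textbf{Main obstacle.} The hardest step is Step 1 at junctions. Crossing roads of different directions, and a road passing through the maze, create opportunities to splice a fast piece of one road onto another so as to gain $\Theta(1)$ per crossing. Over many crossings this would destroy the $O(1)$ additive lower bound. My first attempt would be to make all roads mutually non-intersecting except at gadgets, using over/under detours realised as short padded loops. Each gadget would be checked, via the offset potential, to have ``slack'' at least as large as any shortcut it permits. If this fails, I would fall back to a weaker statement in which the lower-bound constant depends on the number of scales used between $p$ and $q$. I would then try to repair it by aligning the padding phases of roads so that offsets telescope.
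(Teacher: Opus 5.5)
\textbf{Main gap: the lower bound in Step~1.} Step~1 is the heart of the matter, and you leave it open. The remedy you propose cannot work as stated. $H$ is a subgraph of the planar grid, so two roads whose drawings cross must share a vertex; there is no ``over/under''. At that common vertex a path can splice one road onto the other and collect each road's $O(1)$ phase error once per switch, which is exactly the accumulation you fear. The offset potential does not help by itself, because a bounded potential exists if and only if the invariant already holds; it is a restatement, not an argument. Your fallback proves a weaker statement.

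The idea you are missing is that roads should \emph{never meet}. In the paper, every line of level $i$ is deleted in the $m_i$-neighbourhood of every other level-$i$ line and of every surviving segment of higher level, so higher levels have priority. As a result, all surviving segments of all levels are pairwise at distance at least $m_1$. Everything off the highways is uniformly slower than Euclidean: weight $b>1$ per grid edge in the weighted model, and a length-$5$ detour path for every $9/2$-long step in the unweighted graph. A shortest path then splits into two kinds of pieces. Pieces $A_j$ inside one highway have length at least $d(s_j,t_j)-(2b+1)$. Connectors $B_j$ between distinct highways have length at least $b\,d(t_j,s_{j+1})\ge d(t_j,s_{j+1})+(b-1)(m_1-2)$. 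Choosing $(b-1)(m_1-2)>2b+1$, every switch pays for the error of the next highway. The triangle inequality then gives $d_H(p,q)\ge d(p,q)-(2b+1)$, with no junction gadgets and no phase alignment.

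\textbf{Step~2 is a cost model, not an argument.} It does not account for the geometry that this separation forces. First, once crossings are replaced by gaps, a road crossing another at angle $\phi$ loses a stretch of length of order $m_i/\sin\phi$. Second, a large-scale road would be chopped up by all the dense small-scale roads unless lower levels yield to higher ones. Third, following a single road whose direction is within $\delta$ of $pq$ can leave you at distance about $d\delta$ from $q$. To get the $O(d\delta^2)$ angular cost you need two lines, rotated by at most $\pi/k_i$ in opposite directions, passing near $p$ and near $q$ respectively and meeting at a point $r$.

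The paper's choices are: $k_i$ directions with spacing $2k_i^3m_i$, $k_i=\lceil m_i^{\sqrt6/6}\rceil$ and $m_{i+1}=k_i^2m_i$. It proves $d_H(p,q)\le d(p,q)+O(k_i^3m_i+d(p,q)/k_i^2)$ and optimizes $i$ to get $O(d^c)$ with $c=(11+4\sqrt6)/25<5/6$. This strict inequality is the source of $o(d^{5/6})$, not slowly decaying parameters. Note also that balancing your three listed terms $d\delta^2$, $s$, $d/(s\delta)$ as stated gives $d^{3/5}$, not $d^{5/6}$. This mismatch shows the model is not tied to a concrete rule for how roads of different scales and directions interact.

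Your observation that staircases in $\frac32\ZZ^2$ move at speed at most $2\sqrt2/3<1$ is correct. It is the same slack the paper uses: it works with weights $a=2/3\le\tau$ and $b=10/9$ on the unit grid, then blows up by $9/2$ and realizes the weights $3$ and $5$ with a fixed gadget.
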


\begin{corollary}
There is a planar graph $H'$ on the vertex set $\ZZ^2$ with all edges connecting vertices of distance at most $2$, such that for any two vertices $p$ and $q$, we have
$$d(p,q)-O(1)\le d_H(p,q)=d(p,q)+o(d(p,q)^{5/6}).$$
\end{corollary}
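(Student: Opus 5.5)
We derive the corollary from Theorem~\ref{best} by a local re-embedding. Take the graph $H$ of Theorem~\ref{best} on $\frac32\ZZ^2$, whose edges have length $3/2$ and are axis-parallel, so $H$ is a planar subgraph of the square grid of mesh $3/2$. Scale by $2/3$ to obtain a subgraph $\tilde H$ of the unit grid $\ZZ^2$ with unit edges. Then $d_{\tilde H}(p,q)=d_H(\frac32p,\frac32q)$ and $d(p,q)=\frac23 d(\frac32p,\frac32q)$, so the factor $3/2$ between graph and Euclidean distance breaks the near-isometry. We therefore do not scale. Instead, we keep the combinatorial structure of $H$ and realize each edge of length $3/2$ by a path in $\ZZ^2$ whose Euclidean geometry stays close, with edges of length at most $2$.

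Concretely, I would map each vertex $x\in\frac32\ZZ^2$ to $f(x)=\lfloor x\rfloor$ (coordinatewise floor). Any two $\frac32\ZZ^2$ points differing by $(3/2,0)$ then have images differing by $(1,0)$ or $(2,0)$, and similarly vertically. The map $f$ is injective, since the image of $\frac32\ZZ^2$ is the set of integer points both of whose coordinates are $\not\equiv 2 \pmod 3$; in each coordinate, $\frac32 k$ has floor taking values $\equiv 0,1 \pmod 3$. We define $H'$ by joining $f(x)f(y)$ for each edge $xy$ of $H$. Each such edge has length $1$ or $2$, is axis-parallel, and passes through no other image point. Since $H$ is a subgraph of a planar grid and $f$ preserves the coordinate order in each axis (it is monotone and injective on each grid line), $H'$ is drawn with axis-parallel segments in the same combinatorial positions, hence it is planar. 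We have $d_{H'}(f(x),f(y))=d_H(x,y)$ and $|d(f(x),f(y))-d(x,y)|\le\sqrt2$.

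Here lies the main obstacle: an edge of $H'$ has Euclidean length $1$ or $2$ but counts as one step, whereas $H$ has a uniform edge length $3/2$. So $d_{H'}$ must be compared with $d_H$ scaled by $3/2$, not with $d_H$ itself. I would therefore use an $H$ satisfying the estimate of Theorem~\ref{best} for the unit grid and subdivide differently. Take $H$ on $\frac32\ZZ^2$ as given and replace each edge by a path of $3$ edges of length $1/2$; then scale by $2$, so that each original edge becomes $3$ unit steps while all coordinates remain integer ($3\ZZ^2$ plus subdivision points). The result is a subgraph of $\ZZ^2$ with $d_{H''}=3d_H$ and Euclidean distances multiplied by $2$. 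This still mismatches by the factor $3/2$, so instead I would use subdivision points at Euclidean spacing $3/2$ with integer positions alternating steps $1,2$. An edge of length $3$ in $3\ZZ^2$ is then split into two steps $1$ and $2$, which gives graph distance $2d_H$ against Euclidean distance $3d_H/\tfrac32\cdot$—that is, exactly matching once we scale $\frac32\ZZ^2$ by $2$. Thus $H'$ consists of the vertices $3\ZZ^2$ together with one interior vertex on each edge of $2H$, with edge lengths $1$ and $2$, and $d_{H'}(p,q)=2d_H=d(p,q)\pm O(1)+o(d^{5/6})$ for $p,q\in 3\ZZ^2$.

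Finally, we attach every remaining grid point $z\in\ZZ^2$ to the graph by a pendant edge of length at most $2$ to a nearby used vertex. If needed, we route through chains of unused points inside each $3\times3$ cell without crossing, keeping every edge of length at most $2$ and preserving planarity, since each cell face is an empty region. Pendant trees have bounded depth, so distances between arbitrary $p,q$ change by $O(1)$ and cannot create shortcuts. This yields both the lower bound $d-O(1)$ and the upper bound $d+o(d^{5/6})$. The step I expect to need the most care is the planar placement of the leftover points, together with checking that the $1$–$2$ splitting keeps the path lengths additive along both axis directions.
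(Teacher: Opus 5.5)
Your final construction is exactly the paper's, and the identities $d_{H'}(2u,2v)=2d_H(u,v)$ and $d(2u,2v)=2d(u,v)$ carry the whole argument: enlarge $H$ by a factor $2$ so its vertices become $3\ZZ^2$, put one subdivision vertex at a trisection point of each edge, and hang the remaining grid points as pendants. The one detail you leave open is that each length-$2$ edge runs straight through a leftover grid point (e.g.\ the edge from $(3x,3y)$ to $(3x+2,3y)$ passes through $(3x+1,3y)$), so ``each cell face is an empty region'' is not enough; the edge must be drawn with a small detour around that point, as the paper does.

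The ``main obstacle'' you raised against the floor map $f$ is illusory, because $d_H$ counts edges and Euclidean edge lengths never enter. The facts $d_{H'}(f(x),f(y))=d_H(x,y)$ and $|d(f(x),f(y))-d(x,y)|<\sqrt2$ already give the corollary, after the same pendant attachment and the same detours around the skipped points $(3a+2,y)$ and $(x,3b+2)$. So you had a valid alternative embedding at scale $1$, and you discarded it.
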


\begin{proof}
We use the same idea as in the proof of Corollary~\ref{inwheelongrid}. However, since  the graph $H$ provided Theorem~\ref{best} has a particularly simple structure, the transformation is simpler here. 

We use two types of edges in $H'$. First, for every $p\in\ZZ^2$, we connect $3p$ to every grid point at Euclidean distance at most $\sqrt2$ from $3p$. Next, for each edge $pq$ of $H$, we add one additional edge. If $p=(\frac32x,\frac32y)$, $q=(\frac32(x+1),\frac32y)$, then we add the edge joining $(3x,3y)$ to $(3x+2,3y)$. Similarly, if $p=(\frac32x,\frac32y)$, $q=(\frac32x,\frac32(y+1))$, then we add the edge joining $(3x,3y)$ to $(3x,3y+2)$.

The claimed bounds on graph distances follow from the corresponding bounds for $H$. Indeed, $H'$ can be viewed as follows: first enlarge the vertex set of $H$ by a factor of $2$, then subdivide each edge, with the subdivision vertex placed at a trisection point of the enlarged edge, and finally add short edges connecting the remaining grid points to this enlarged and subdivided copy of $H$. 

One can get a planar drawing of $H'$ by drawing every edge of length less than $2$ as a straight-line segment. An edge of length 2 can be drawn arbitrarily close to the corresponding straight-line segment, with a small detour around an intermediate grid point. These detours can be chosen sufficiently small that no crossings are introduced. Hence $H'$ is planar.
\end{proof}
\smallskip

\textbf{Historical remarks, background.}
A classical problem in computational geometry is the following. Given a finite set of points, $P$, in the plane, we would like to triangulate it by connecting certain pairs of points of $P$ by straight-line edges in such a way that the length of the shortest path between any two points in $P$, using these edges, is not much longer than their Euclidean distance. (Here the length of a path is the sum of the lengths of its edges.) Chew~\cite{Ch86} discovered that the Delaunay triangulation achieves a constant factor approximation, and applied this result to several motion planning and visibility problems. The value of the constant has been subsequently improved~\cite{DoFS90, KeG92}. The case when one is allowed to add a limited number of so-called ``Steiner points'' to $P$ turned out to be important in mesh generation~\cite{BeSA17}.
\smallskip

In the infinite setting, a very similar problem was raised in metric geometry. A \emph{$\delta$-net} in a metric space is a subset $P$ with the property that every point of the space is within a distance of $\delta$ from at least one element of $P$.
A \emph{edge-weighted}, bounded degree graph whose vertex set is an $\delta$-net for some $\delta>0$, is said to be \emph{uniform} if the weights (``lengths'') of its edges are between two positive constants. It is a basic question in metric geometry to decide which complete manifolds are approximable, up to an additive error, by a uniform graph $H$. In other words, when does there exist an absolute constant $C$ such that, for any pair of vertices, the minimum total length of a path in $H$ connecting them differs from their distance by at most $C$. For the Euclidean plane, Burago and Ivanov~\cite{BuI15} constructed such a uniform graph $H$ on the vertex set $V(H)=\mathbb{Z}^2$.
\smallskip

Due to its applications to network-design problems and distributed computing, the same problem has also been widely studied in a discrete abstract setting. Given a possibly weighted, connected graph $G$, we want to find a so-called ``spanner'' of $G$, i.e., a subgraph $H\subseteq G$ with $V(H)=V(G)$ such that the distance between any two vertices is roughly the same in $H$ as in $G$; see \cite{PeS89, AhBD20}. In the present paper, unless we state it otherwise, \emph{every edge has weight (``length'') $1$}, so that the distance $d_G(u,v)$ between any pair of vertices $u,v\in V(G)$ is equal to the smallest number of edges along a path connecting them. Of course, if we choose $H$ to be $G$, then $d_H(u,v)=d_G(u,v)$ for every $u$ and $v$. The goal is to find a sparse subgraph of $G$ such that the \emph{multiplicative distortion} $$\max_{u,v\in V}\frac{d_H(u,v)}{d_G(u,v)}$$ or the \emph{additive distortion}
$$\max_{u,v\in V}\left(d_H(u,v)-d_G(u,v)\right)$$
remain as small as possible.
\smallskip

Motivated by the above considerations, several researchers, including Bruce Kleiner, Gady Kozma, Oded Schramm, Itai Benjamini, Paul Erd\H os and the first named author, independently, raised the following question, which is still open.

\begin{problem}\label{additive}
Does there exist an unweighted graph $H$ on the vertex set $V(H)={\mathbb{Z}}^2$ such that
\[\sup_{u,v\in {\mathbb{Z}}^2}\left(d_H(u,v)-d(u,v)\right)<\infty\;?\]
\end{problem}

Using the terminology of Benjamini~\cite{Be13}, a graph $H$ with the above property is \emph{slack-isometric} to the Euclidean plane. A special case of this problem was discussed in~\cite{Be26}.
\smallskip

In~\cite{PaPS90}, a weaker statement was established: \emph{For every $\varepsilon>0$, there exists a graph $H=H_{\varepsilon}$ with $V(H)={\mathbb{Z}}^2$ such that
\[1-\varepsilon<\frac{d_H(u,v)}{d(u,v)}<1+\varepsilon,\]
for any pair of vertices $u,v\in {\mathbb{Z}}^2$ that are sufficiently far from each other.}
\smallskip

Radin and Sadun~\cite{RaS96} used the edge set of the so-called pinwheel tiling~\cite{Ra94, Ra95} (see also~\cite{CoR98}) to construct a single graph $H$ that satisfies the above properties for \emph{all} $\varepsilon>0$ at the same time. Their analysis is based on a compactness argument; it provides no information on how fast the ratio of the graph distance and the Euclidean distance tends to $1$, as $d(u,v)\rightarrow\infty$, but the paper \cite{CaC26} proved $d_H(u,v)=d(u,v)(1+O(\log^{-c}\log d(u,v)))$ for some absolute constant $c>0$.
\smallskip

Theorem~\ref{generalizedpinwheel} gives a different construction, based on partitions into round convex pieces, with an explicit logarithmic error term. Its proof was inspired by the pinwheel construction. Theorem~\ref{best} adapts the highway construction of Cai \emph{et al.} to obtain an unweighted graph on an enlarged grid with polynomial additive error.

\section{Measures of fatness}\label{sec2}

In this section, we explore \emph{roundness}, our novel measure of fatness for compact convex planar bodies. While it is closely related to more standard measures as shown by Lemma~\ref{compare}, it is uniquely suitable to our purposes as a planar body can be cut into many parts of prescribed area typically without \emph{any} decrease in the roundness. Strictly speaking, this does not always hold. For example, the roundness of a circular disk is $1$ (the maximal value of roundness), but if we cut it to two half-disks, the roundness of those are only $1/2$. But as we will see in Lemma~\ref{sections}, it holds generally enough. Lemma~\ref{sections} will then be used in the next section to prove Theorem~\ref{fat}. The proof of Theorem~\ref{generalizedpinwheel} is somewhat more complex: there we need partitions with separating segments of predetermined directions. When cutting in a predetermined direction the roundness will somewhat decrease. To counteract this, we also need to be able to cut a fat body in such a way that the roundness never decreases but it \emph{significantly increases} for certain parts. The last part of Lemma~\ref{sections} facilitates this, while Lemma~\ref{longcut} establishes technical properties of the cuts in predetermined directions that we will also need for the proof of Theorem~\ref{generalizedpinwheel}.

Let $\mu(S)$ stand for the area of a (typically compact convex) part $S$ of the plane and let $\diam(S)$ stand for its diameter.

We compare roundness to a more standard measure of fatness defined as $D(S):=\mu(S)/(\diam(S))^2$ for a compact convex set $S$ in the plane with more than a single point. The ratio of the inradius $r(S)$ to the circumradius $R(S)$ of $S$ is another standard measure of fatness. We will not use this measure here, but note that it is also closely related to $D(S)$: for example,
$$0.56D(S)<\frac{r(S)}{R(S)}<2D(S).$$

In our first lemma, we state some simple properties of roundness, including its close connection to $D$.

\begin{lemma}\label{compare}
Let $S$ be a compact convex set in the plane with more than a single point.
\begin{enumerate}
\item$0\le C(S)\le1$.
\item$D(S)\le C(S)\le2D(S)$.
\item The infimum in the definition of $C(S)$ is a minimum, namely $C(S)=C(S,p,q)$ for a pair of distinct points on the boundary of $S$.
\end{enumerate}
\end{lemma}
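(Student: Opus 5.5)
We treat the three parts separately; part~2 is the substantive one and part~3 is a compactness argument.

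\emph{Part 1 and the upper bound in part 2.} Nonnegativity is trivial. For the upper bounds take $p,q\in S$ with $d(p,q)=\diam(S)$ and let $rs$ be the chord of $S$ on the orthogonal bisector $\ell$ of $pq$. Since $d(r,s)\le\diam(S)=d(p,q)$, we get $C(S)\le C(S,p,q)\le1$. The midpoint $m$ of $pq$ lies in $S\cap\ell$, so $m\in rs$. Hence the quadrilateral $\mathrm{conv}\{p,r,q,s\}\subseteq S$ has perpendicular crossing diagonals $pq$ and $rs$, and its area is $d(p,q)\,d(r,s)/2$. Therefore
\[
\mu(S)\ge \diam(S)\,d(r,s)/2,\qquad\text{i.e.}\qquad C(S)\le C(S,p,q)\le 2\mu(S)/\diam(S)^2=2D(S).
\]

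\emph{Lower bound $D(S)\le C(S)$.} Fix arbitrary distinct $p,q\in S$. Write $t=d(p,q)$, $h=d(r,s)$ and $\delta=\diam(S)$; we must show $\mu(S)\le h\delta^2/t$.

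1. The key observation is a cone argument. Every $x\in S$ on $q$'s side of $\ell$ has the segment $px$ crossing $\ell$ inside $S$, hence inside $rs$. So that half of $S$ lies in the cone with apex $p$ spanned by $rs$. Symmetrically, the other half lies in the cone with apex $q$ spanned by $rs$.

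2. Measure the coordinate $u$ along the direction $pq$. The cone from $p$ has width $hu/(t/2)$ at distance $u$ from $p$. The projection of $S$ onto the line $pq$ is an interval of length at most $\delta$ containing $p$ and $q$.

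3. Let $a,b\ge t/2$ be the extents of $S$ beyond the midpoint on the two sides, so $a+b\le\delta$. Integrating the cone widths gives
\[
\mu(S)\le \frac{h}{t}\bigl(t(a+b)+a^2+b^2\bigr).
\]

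4. Cross-sections of $S$ parallel to $\ell$ also have width at most $\delta$, so each width can be replaced by $\min\bigl(2hu/t,\delta\bigr)$. The remaining task is to optimize over $a,b$ under these constraints.

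This step is where I expect the main obstacle. The crude cone estimate only gives $\mu(S)\le (h/t)(\delta^2+t^2/2)$, which loses a constant. Two refinements remain available if it still falls short. First, use the extra width cap from step~4. Second, note that when $t$ is comparable to $\delta$, both cones are short; when $t$ is small, the term $t^2/2$ is negligible, and the two halves can then be compared with a single triangle of height $\delta$. I would push the case analysis on $t/\delta$ until the constant becomes exactly $1$.

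\emph{Part 3.} Fix a direction $v$ and a line $\ell$ orthogonal to $v$. Among all pairs $p,q$ with $q-p$ parallel to $v$ and orthogonal bisector $\ell$, the quantity $h$ is the same, so $C(S,p,q)$ is minimized by maximizing $t$. Sliding $p$ and $q$ symmetrically outward along $v$, the maximum is attained when one of them hits $\partial S$.

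Call this maximal symmetric length $T(v,\ell)$. The half-length $T/2$ is the minimum of the distances from $\ell$ to the boundary of $S$ along $\pm v$. At such an extremal pair one point lies on $\partial S$, and I would either show the other can also be moved to $\partial S$ without increasing the ratio, or note that the infimum is attained by such pairs anyway.

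Then $C(S)=\inf_{v,\ell} h(\ell)/T(v,\ell)$, where $\ell$ ranges over lines meeting the interior of $S$. The functions $h$ and $T$ are continuous there, and the parameter space is compact except near lines tangent to $S$. To handle the boundary of the parameter space, observe that as $\ell$ approaches a supporting line, $T\to0$ while $h/T$ stays bounded below, by the same cone estimate as in steps 1--3. Hence the infimum is approached in a compact subset and is attained. The degenerate case where $S$ is a segment is trivial, since then $C(S)=0$ is attained at its endpoints.
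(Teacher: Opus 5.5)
Your Part~1 and the bound $C(S)\le 2D(S)$ are correct and match the paper's kite argument. The genuine gap is the lower bound $D(S)\le C(S)$: you do not prove it, and the route you sketch cannot reach it. The cone containment uses convexity only along segments through $p$ or through $q$. It therefore permits the cross-section of $S$ at signed distance $v$ from $\ell$ to be $h+2h|v|/t$ on \emph{both} sides of $\ell$.

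Take $t=\delta$. Then $a=b=\delta/2$, and if $h\le\delta/2$ your cap never binds. Integrating $\min(h+2h|v|/t,\delta)$ then gives only $\frac32h\delta$, while you need $h\delta^2/t=h\delta$. For $h$ small compared to $\delta$ this is essentially attained by a non-convex bow-tie: the union of the two truncated cones, trimmed negligibly near $p$ and $q$ to keep the diameter $\delta$. So no case analysis on $t/\delta$ using cones, width caps and diameter yields the constant $1$; you only get $D(S)\le\frac32C(S)$.

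The missing idea is convexity across $\ell$ for arbitrary pairs of points, i.e.\ the supporting lines of $S$ at $r$ and $s$. Together with the two supporting lines parallel to $\ell$ (at distances $a\le b$, say), they bound a trapezoid containing $S$. Its cross-section is a single affine function $h+\alpha v$ on $[-a,b]$. If $\alpha\le0$, then $\mu(S)\le h(a+b)\le h\delta$ at once. Otherwise nonnegativity at $v=-a$ gives $\alpha\le h/a\le 2h/t$. Using $a+b\le\delta$ and $b-a\le\delta-2a\le\delta-t$, you get $\mu(S)\le(a+b)(h+\alpha(b-a)/2)\le\delta(h+\frac ht(\delta-t))=h\delta^2/t$. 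This is exactly the paper's trapezoid argument.

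Part~3 is also left open in two places. First, sliding $p,q$ symmetrically with $\ell$ fixed puts only one of them on $\partial S$. The paper gets both via the monotonicity $C(S,p,q)\le C(S,p,q')$ for $q'$ on the segment $pq$. This holds because the triangle $prs\subseteq S$ forces the chord of $S$ on the bisector of $pq'$ to have length at least $d(r,s)\,d(p,q')/d(p,q)$. It lets you extend any pair along its own line to $\partial S$ at both ends without increasing $C$.

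Second, ``$h/T$ stays bounded below near supporting lines'' does not imply that the infimum is attained. Take $p$ at a corner of $S$ with angle $\theta$ and let $q'\to p$ along the bisector. Then $C(S,p,q')\to\tan(\theta/2)$, a finite value that may well equal $C(S)$, so you must exhibit an actual pair doing at least as well. The paper does this by taking the chord of $S$ along the angle bisector at the corner and applying the same monotonicity.
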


\begin{proof}
$C(S)\ge0$ is trivial. Throughout this proof, intersections of a convex set with a line are regarded as possibly degenerate segments.

Consider $p,q\in S$ with $d(p,q)=\diam(S)$. Let the segment $rs$ be the intersection of $S$ and the orthogonal bisector of $pq$, see Figure~\ref{fig:diameter-kite}. As $d(r,s)\le\diam(S)$, we have $C(S)\le C(S,p,q)=\frac{d(r,s)}{d(p,q)}\le1$ proving part~1. Also, the area of $S$ is at least the area of the kite $prqs$, which is $d(p,q)d(r,s)/2=C(S,p,q)\diam(S)^2/2$. Thus $D(S)=\mu(S)/\diam(S)^2\ge C(S,p,q)/2\ge C(S)/2$ proving the second inequality in part~2.

For the first inequality in part~2, consider an arbitrary pair of distinct points $p,q\in S$ and let $rs$ be the intersection of $S$ and the orthogonal bisector of $pq$. Notice that $S$ is contained within a (possibly degenerate) trapezoid $XYZT$ with all sides touching $S$, $XY$ at $r$, $ZT$ at $s$ and the other two sides orthogonal to $pq$, see Figure~\ref{fig:supporting-trapezoid}. Let $d_1$ and $d_2$ be the distances between the parallel lines $XT$ and $rs$, and between $YZ$ and $rs$, respectively. Clearly, $d_1,d_2\ge d(p,q)/2$ and $d_1+d_2\le\diam(S)$. We also assume, without loss of generality, that $d_1\le d_2$. We have $d(X,T)=d(r,s)-\alpha d_1$ and $d(Y,Z)=d(r,s)+\alpha d_2$ for the same value $\alpha$ (determined by the angles of the trapezoid), thus $0\le d(X,T)$ implies $\alpha\le d(r,s)/d_1\le d(r,s)/(d(p,q)/2)=2C(S,p,q)$. The area of the trapezoid is
\begin{eqnarray*}
(d_1+d_2)(d(X,T)+d(Y,Z))/2&=&(d_1+d_2)(2d(r,s)+\alpha(d_2-d_1))/2\\
&\le&\diam(S)(2d(r,s)+2C(S,p,q)(d_2-d_1))/2\\
&=&\diam(S)(d(r,s)+C(S,p,q)(d_1+d_2-2d_1))\\
&\le&\diam(S)(d(r,s)+C(S,p,q)\diam(S)-2C(S,p,q)(d(p,q)/2)\\
&=&C(S,p,q)\diam(S)^2.
\end{eqnarray*}
The area of $S$ is bounded by the area of the trapezoid, thus $D(S)=\mu(S)/\diam(S)^2\le C(S,p,q)$. As $p$ and $q$ were arbitrary, we also have $D(S)\le C(S)$ proving the second part of the lemma.

\begin{figure}[!htbp]
\centering
\begin{minipage}[t]{0.49\textwidth}
\vspace{0pt}\centering
\includegraphics[width=\linewidth]{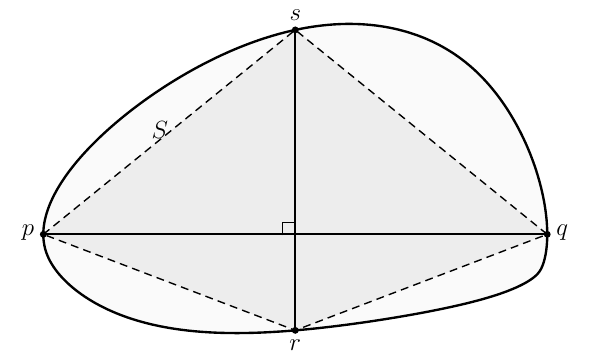}
\caption{}\label{fig:diameter-kite}
\end{minipage}\hfill
\begin{minipage}[t]{0.49\textwidth}
\vspace{0pt}\centering
\includegraphics[width=\linewidth]{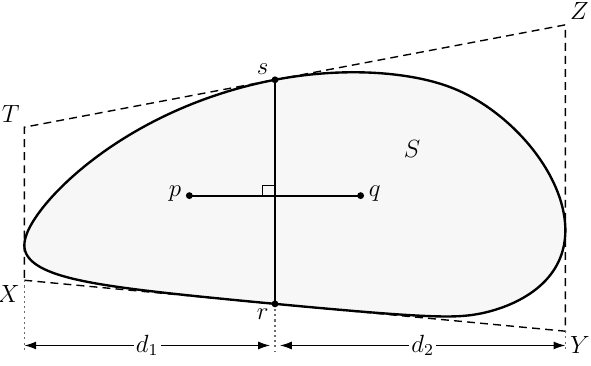}
\caption{}\label{fig:supporting-trapezoid}
\end{minipage}
\end{figure}

For part~3, consider the distinct points $p,q\in S$ and let $q'$ be an intermediate point on the segment $pq$. Let the segment $rs$ be the intersection of $S$ with the orthogonal bisector of $pq$ and similarly, let $r's'$ be the intersection of $S$ with the orthogonal bisector of $pq'$, see Figure~\ref{fig:nested-bisectors}. As the triangle $prs$ is contained in the convex set $S$, we have $d(r',s')\ge d(r,s)\frac{d(p,q')}{d(p,q)}$. Therefore, $C(S,p,q)\le C(S,p,q')$. This implies that in the infimum in the definition of $C(S)$, it is enough to consider points $p$, $q$ on the boundary of $S$, because extending the $pq$ interval in either direction does not increase the value $C(S,p,q)$. It is easy to see that $C(S,p,q)$ is continuous in $p$ and $q$, so by the compactness of the boundary of $S$ we are almost done with proving the third claim in the lemma. We only have to consider the case when the infimum in the definition is approached by point pairs $p_i\ne q_i$ with both sequences $p_i$ and $q_i$ tending to the same point $p$. In this case $\lim_iC(S,p_i,q_i)=\infty$ unless $S$ is a segment or $p$ is corner point of $S$, that is, there is more than one tangent line touching $S$ at $p$. In the first case, we simply have $C(S,p_i,q_i)=0$, so all point pairs give the minimum. In the latter case, take the angular bisector of $S$ at $p$ (bisecting the angle between the extreme tangents) and let $q$ be the other intersection of this line with the boundary of $S$. It is not hard to see that $C(S,p,q)\le\lim_i C(S,p_i,q_i)$.

\begin{figure}[!htbp]
\centering
\includegraphics[width=0.62\textwidth]{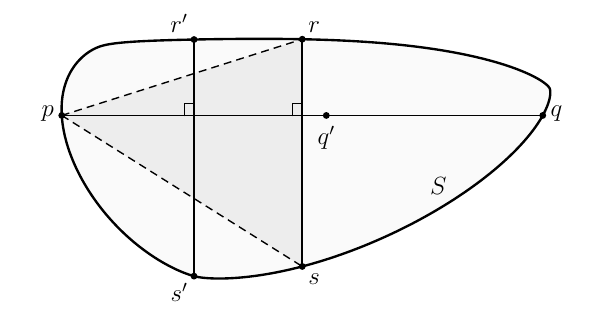}
\caption{}\label{fig:nested-bisectors}
\end{figure}
\end{proof}

We remark, that while $C(S)=0$ implies that $D(S)=0$, hence $\mu(S)=0$ and so $S$ is a segment, $C(S)=1$ holds for varying shapes, including a circular disk and a square.

The next lemma is the key to the proof of our Theorem~\ref{fat}. In fact, part~1 of the lemma is enough for this proof. Part~2 is similar, but allows showing not only that the roundness can be preserved by the cutting procedure but also that in some parts it increases significantly. We will use it to obtain a partition that preserves roundness and has cuts in many arbitrarily chosen directions. To maintain compactness, we define the region ``between'' two parallel lines to include those lines.

\begin{lemma}\label{sections}
Let $S$ be a compact convex set in the plane with positive area. Let $x,y\in S$ be points with $d(x,y)=\diam(S)$. Let $l_1$ and $l_2$ be two lines perpendicular to $xy$ and $S'$ the part of $S$ between these lines. 
\begin{enumerate}
\item If $S'$ has more than a single point, then
$$C(S')\ge\min\left(C(S),\frac{\mu(S')}{6\mu(S)}\right).$$
\item If $S'$ contains the midpoint of the segment $xy$, then
$$C(S')\ge\min\left(\left(1-4\frac{\mu(S')}{\mu(S)}\right)\frac{C(S)}{2\frac{\mu(S')}{\mu(S)}+C(S))},\frac{\mu(S')}{6\mu(S)}\right).$$
\end{enumerate}
\end{lemma}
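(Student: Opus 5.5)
The plan is to fix a minimizing pair for $S'$ and compare its bisector chord in $S'$ with the corresponding chord in $S$. By part~3 of Lemma~\ref{compare}, choose distinct $p,q\in S'$ with $C(S')=C(S',p,q)$. Let $rs=S'\cap\ell$ and $r's'=S\cap\ell$, where $\ell$ is the orthogonal bisector of $pq$; then $rs\subseteq r's'$. Since $S'=S\cap(\text{slab})$, there are two cases.

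\emph{Case A: $rs=r's'$.} Then $C(S')=C(S',p,q)=C(S,p,q)\ge C(S)$ and we are done. Otherwise, at least one endpoint, say $r$, lies on $l_1$ or $l_2$ and is not on the boundary of $S$.

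\emph{Case B: $r\in l_1\cup l_2$ with $r$ interior to the chord $r's'$.} Here I would prove the area bound $C(S',p,q)\ge\mu(S')/(6\mu(S))$, using these facts about the geometry of $S$:
\begin{itemize}
\item Because $xy$ is a diameter, $S$ lies in the slab bounded by the perpendiculars to $xy$ at $x$ and $y$.
\item Every chord of $S$ perpendicular to $xy$ has length at most $\diam(S)$.
\item Hence, if $w$ is the width of the slab between $l_1$ and $l_2$ (clipped to $S$), then $\mu(S')\le w\,\diam(S)$.
\item By part~2 of Lemma~\ref{compare}, $\mu(S)\ge D(S)\diam(S)^2$; alternatively, use the kite $x\,u\,y\,v$, where $uv$ is the chord on the bisector of $xy$.
\end{itemize}
The target inequality then reduces to showing that $d(r,s)/d(p,q)$ is at least a constant times $w/\diam(S)$, possibly weighted against $\mu(S)/\diam(S)^2$. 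To get this, split Case B by the angle $\theta$ between $pq$ and $xy$.

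If $pq$ is nearly perpendicular to $xy$, then $\ell$ is nearly parallel to $l_1$. Since $r\in l_1$ while $\ell$ passes through the midpoint of $pq$, which lies inside the slab, the chord $rs$ must traverse a large part of the slab.

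If instead $\ell$ crosses the slab steeply, then $d(p,q)$ is controlled by $w$ and by the perpendicular extent of $S'$. In that sub-case I would instead use convexity: the triangle on $p,q$ and a far point of $S'$ lies in $S'$, giving $d(r,s)\gtrsim \mu(S')/\diam(S)$ while $d(p,q)\le\diam(S)$. The constant $6$ should come out of comparing the area of $S'$ with that of such a triangle (or trapezoid) via the inequality $\mu(\cdot)\le C\cdot\diam^2$ from Lemma~\ref{compare}.

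For part~2, the extra hypothesis that $S'$ contains the midpoint of $xy$ lets us compare $S'$ with $S$ directly rather than only through the slab width. The chord of $S$ on the perpendicular bisector of $xy$ lies in $S'$ and has length at least $C(S)\diam(S)$. In Case A we now only know $C(S',p,q)=C(S,p,q)$, so the first term of the minimum must arise in a refined Case B. There, $\ell$ exits through $l_1$ or $l_2$, but the truncated part of $r's'$ is bounded using $\mu(S\setminus S')=\mu(S)-\mu(S')$: a convex region of area $\mu(S)-\mu(S')$ on each side forces the slab to be narrow relative to $\diam(S)$ exactly in the proportion giving the factor $(1-4\mu(S')/\mu(S))/(2\mu(S')/\mu(S)+C(S))$. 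I would carry this out by inscribing in $S$ the kite on $x,y$ and the central chord, measuring how much of that kite lies in $S'$, and solving the resulting linear inequality for $d(r,s)/d(p,q)$.

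The main obstacle is Case B of part~1, specifically the sub-case where $pq$ is tilted so that neither the slab width nor the central chord directly controls $d(r,s)$. I expect to need a careful convexity argument (triangles $prs$ and $qrs$ inside $S'$, plus the supporting-trapezoid picture from the proof of Lemma~\ref{compare}) to extract the explicit constant $1/6$.
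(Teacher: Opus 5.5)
You split the argument into Case~A, where $S'\cap\ell=S\cap\ell$, and Case~B, where an endpoint of $S'\cap\ell$ lies on $l_1\cup l_2$ inside $S$. That split is sound, and Case~A is handled correctly for part~1. But Case~B is where the content of part~1 lies, and, as you suspected, your sketch does not prove it.

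Bounding perpendicular chords of $S$ by $\diam(S)$ loses a factor $D(S)$: it only gives $\mu(S')/\mu(S)\le w/(D(S)\diam(S))$. Likewise, your ``steep'' subcase ($d(r,s)$ of order $\mu(S')/\diam(S)$, $d(p,q)\le\diam(S)$) yields only $C(S',p,q)$ of order $D(S)\mu(S')/\mu(S)$. Such product bounds already follow from $C(S')\ge D(S')$ and are useless for repeated cutting; the point of the lemma is the minimum. What you need is $w\ge\frac{\mu(S')}{2\mu(S)}\diam(S)$. This follows from $\mu(S')\le wd'$ and $\mu(S)\ge\diam(S)d'/2$, where $d'$ is the longest chord of $S'$ perpendicular to $xy$ and the second bound is the area of the convex hull of $x$, $y$ and that chord. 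It handles the subcase $r,s\in l_1\cup l_2$.

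The hard subcase is $r\in l_1$, $s\in\partial S$, where $d(r,s)$ can be far smaller than $w$, and you give no mechanism for it. Your first subcase is also geometrically off: if $pq$ is nearly perpendicular to $xy$, then $\ell$ is nearly parallel to $xy$, hence perpendicular to $l_1$, not parallel to it. The missing idea uses that $xy$ is a diameter. With $t$ the midpoint of $pq$ and $\alpha=\measuredangle rps$, the identity $d(r,s)=d(p,t)(\tan\measuredangle rpt+\tan\measuredangle tps)$ gives $C(S',p,q)\ge\alpha/2$. Take $s$ on the upper arc, $p$ above $\ell$, and $\gamma=\measuredangle xyp$. Then $d(x,p)\le d(x,y)$ forces $d(y,p)\le2\diam(S)\cos\gamma$. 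So $p$, and hence $s$, lies within $2\diam(S)\cos^2\gamma$ of the supporting line through $y$ perpendicular to $xy$, while $r$ is at distance at least $w$ from it. Since $\alpha+\gamma\ge\pi/2$, this gives $C(S',p,q)\ge\max\bigl(\alpha/2,\,\mu(S')/(2\mu(S))-2\alpha^2\bigr)\ge\mu(S')/(6\mu(S))$.

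Your plan for part~2 is misdirected. Its first term can be well above $C(S)$; for $\mu(S')/\mu(S)=1/20$ and $C(S)=1/500$ the right-hand side is $1/120$, and this gain is the purpose of part~2. So in Case~A the bound $C(S,p,q)\ge C(S)$ is \emph{not} enough. In Case~B, by contrast, the part~1 bound $\mu(S')/(6\mu(S))$ already suffices. The improvement must therefore come from Case~A, not from a ``refined Case~B''.

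Concretely, consider a chord $rs$ joining the upper and lower arcs of $\partial S$ inside a slab of width $d_0\le\diam(S)/2$ that contains the midpoint of $xy$. The central kite gives $d_0\le2\diam(S)\mu(S')/\mu(S)$. Concavity of the arcs gives $d(r,s)\ge(1-2d_0/\diam(S))c$, where $c\ge C(S)\diam(S)$ is the extent of $S'$ perpendicular to $xy$. Also $d(p,q)\le\diam(S')\le d_0+c$. Since $(1-2d_0/\diam(S))c/(d_0+c)$ increases in $c$ and decreases in $d_0$, the formula follows. For $d_0>\diam(S)/2$ the first term is nonpositive, so there is nothing to prove.
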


\begin{proof}
We need to show the desired lower bounds for $C(S',p,q)$ for any pair of distinct points $p,q\in S'$. Let $l$ be the perpendicular bisector of the segment $pq$ and let $l\cap S'$ be the segment $rs$.

For simple reference, we orient the plane so that $xy$ is horizontal with $x$ to the left of $y$ and assume $l_1$ is to the left of $l_2$. Let $d_0$ stand for the distance between $l_1$ and $l_2$. We start with two simple observations.

(i) $d_0\ge\frac{\mu(S')}{2\mu(S)}\diam(S)$. 

\noindent To see this, let $l_0$ be the vertical line with the length $d'$ of the interval $l_0\cap S'$ maximal. The claimed bound follows from $\mu(S')\le d_0d'$ and $\mu(S)\ge\diam(S)d'/2$, where the last formula gives the area of the convex hull of $\{x,y\}\cup(l_0\cap S')$.

(ii) $C(S',p,q)\ge\measuredangle rps/2$.

\noindent Let $t$ be the midpoint of the segment $pq$. The point $t$ is on the segment $rs$ orthogonal to $pq$. The claim follows from $$d(r,s)=d(r,t)+d(t,s)=$$ $$d(p,t)\tan\measuredangle rpt+d(p,t)\tan\measuredangle tps\ge d(p,t)(\measuredangle rpt+\measuredangle tps)=d(p,q)\measuredangle rps/2.$$

Note that the boundary of $S$ consists of two curves connecting $x$ and $y$, the \emph{top arc} above the \emph{bottom arc}. We distinguish several cases according to where the points $r$ and $s$ are on the boundary of $S'$, which consists of parts of $l_1$, $l_2$, the top arc, and bottom arc.
\smallskip

\noindent{\bf Case 1.} Neither $r$ nor $s$ is on the boundary of $S$. Clearly, they have to be on distinct lines $l_i$, so $d(r,s)\ge d_0$. We use (i) above to calculate
$$C(S',p,q)=\frac{d(r,s)}{d(p,q)}\ge\frac{d_0}{\diam(S)}\ge\frac{\mu(S')}{2\mu(S)},$$
a sufficient bound for both parts of the lemma.
\smallskip

\noindent{\bf Case 2.} One of $r$ and $s$ is on the boundary of $S$, the other is not. Assume without loss of generality that $r\in l_1$ and $s$ is on the top arc of the boundary of $S$. Also assume, still without loss of generality, that $p$ is above the line $rs$, see Figure~\ref{fig:section-case-two}.

Let $l_3$ be the vertical line through $y$. Consider the triangle $xyp$: as $d(x,p)\le\diam(S)=d(x,y)$, we must have $\gamma:=\measuredangle xyp\le\measuredangle xpy$. This implies by elementary calculation that the distance between $p$ and $l_3$ is at most $2\diam(S)\cos^2\gamma$. Using the facts that $p$ is above the line $rs$ and its projection $t$ to this line is at or to the left of $s$, furthermore, as $s$ is on the top arc of the boundary of $S$ it cannot be below the segment $pt$, we conclude that $s$ is to the right of $p$. The distance between $s$ and $l_3$ is at most the distance between $p$ and $l_3$, so at most $2\diam(S)\cos^2\gamma$. As $d(x,y)=\diam(S)$, all of $S$ must be on or to the left of $l_3$, so we can assume without loss of generality that $l_2$ is between $l_1$ and $l_3$. The distance of $r$ from $l_3$ is the distance of $l_1$ from $l_3$, at least the distance $d_0$ of $l_1$ from $l_2$, so at least $\frac{\mu(S')}{2\mu(S)}\diam(S)$ by (i) above. Thus, $d(r,s)\ge\diam(S)\mu(S')/(2\mu(S))-2\diam(S)\cos^2\gamma$ and $C(S',p,q)=d(r,s)/d(p,q)\ge\mu(S')/(2\mu(S))-2\cos^2\gamma$.

From (ii) above, we also have $C(S',p,q)\ge\alpha/2$ for $\alpha=\measuredangle rps$. Elementary calculations show that $\alpha+\gamma\ge\pi/2$, so $\cos^2\gamma=\sin^2(\pi/2-\gamma)\le\alpha^2$. Combining these estimates, we obtain
$$C(S',p,q)\ge\max\left(\alpha/2,\frac{\mu(S')}{2\mu(S)}-2\alpha^2\right)\ge\frac{\mu(S')}{6\mu(S)},$$
a satisfactory bound for both parts of the lemma.
\smallskip

\noindent{\bf Case 3.} $r$ and $s$ are on the same arc of the boundary of $S$ connecting $x$ and $y$. Assume without loss of generality that they are on the top arc and $p$ is above the segment $rs$, see Figure~\ref{fig:section-case-three}. $C(S',p,q)\ge\measuredangle rps/2$ by (ii) above. But clearly, $\measuredangle rps\ge \measuredangle xpy\ge\pi/3$ as $xy$ is a longest edge of the triangle $pxy$. So in this case $C(S',p,q)\ge\pi/6>\mu(S')/(6\mu(S))$, a satisfactory bound for both parts of the lemma.
\smallskip

\begin{figure}[!htbp]
\centering
\begin{minipage}[t]{0.49\textwidth}
\vspace{0pt}\centering
\includegraphics[width=\linewidth]{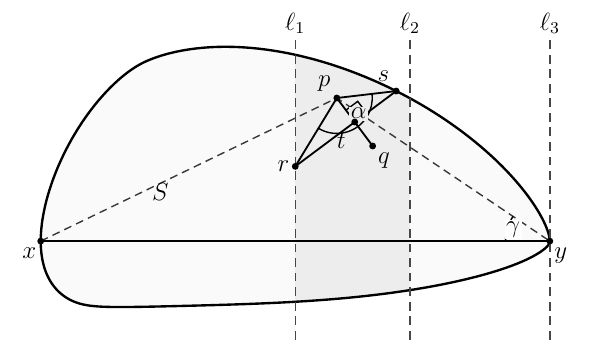}
\caption{}\label{fig:section-case-two}
\end{minipage}\hfill
\begin{minipage}[t]{0.49\textwidth}
\vspace{0pt}\centering
\includegraphics[width=\linewidth]{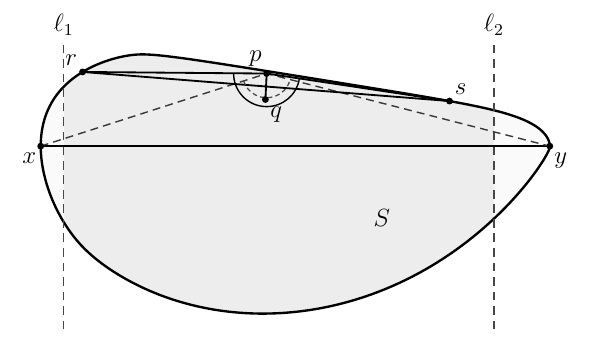}
\caption{}\label{fig:section-case-three}
\end{minipage}
\end{figure}

\noindent{\bf Case 4.} One of $r$ and $s$, is on the top arc of the boundary of $S$ connecting $x$ and $y$ and the other is on the bottom arc. In this case, $l\cap S'=l\cap S$, therefore $C(S',p,q)=C(S,p,q)\ge C(S)$, finishing the proof of part~1 of the lemma. (This argument also proves part~1 of the lemma in Case~3.) In the rest of this proof we concentrate on part~2.

We assume without loss of generality that $r$ is on the top arc and $s$ is on the bottom arc. We also assume that the distance $d_0$ between $l_1$ and $l_2$ is at most $\diam(S)/2$. We can do this as otherwise $\mu(S')/\mu(S)\ge1/4$ (we will see below that even $d_0=\diam(S)/2$ implies this), and so the first term in the minimum in part~2 of the lemma is negative or zero, so there is nothing to prove.

We claim that
$$\frac{\mu(S')}{\mu(S)}\ge\frac{d_0}{2\diam(S)}.$$
To see this, let $r_0s_0$ be the intersection of the orthogonal bisector of $xy$ and $S$. Note that $r_0s_0$ is contained in $S'$ by the assumption in part~2 of the lemma. $S$ is contained in a (possibly degenerate) trapezoid whose sides are the vertical lines through $x$ and $y$ and tangents of $S$ through $r_0$ and $s_0$. The area of this trapezoid is $\diam(S)d(r_0,s_0)$. On the other hand, $S$ contains the kite $xr_0ys_0$, so $S'$ has to contain a $d_0$-wide vertical segment of this kite including the middle. Therefore, we have $\mu(S')\ge\diam(S)d(r_0,s_0)(1/4-(1/2-d_0/\diam(S))^2)$ and $\mu(S')/\mu(S)\ge(1/4-(1/2-d_0/\diam(S))^2)= d_0/\diam(S)-(d_0/\diam(S))^2\ge d_0/(2\diam(S))$ as claimed.

Let $c_1$ (resp., $c_2$) stand for the maximum distance from the line $xy$ of any point of $S'$ above (resp., below) of $xy$. We have $c:=c_1+c_2\ge d(r_0,s_0)=C(S,x,y)d(x,y)\ge C(S)\diam(S)$. The height of $r$ above the $xy$ line is at least $(1-d_0/(d(x,y)/2))c_1=(1-2d_0/\diam(S))c_1$. Similarly, the distance of $s$ from $xy$ is at least $(1-2d_0/\diam(S))c_2$, so $d(r,s)\ge(1-2d_0)/\diam(S))c$. The projection of $S'$ to the $xy$ line is an interval of length $d_0$ while the projection to a vertical line is an interval of length $c$, thus we have $d(p,q)\le\diam(S')\le d_0+c$. We can calculate $C(S',p,q)=d(r,s)/d(p,q)\ge(1-2d_0/\diam(S))c/(d_0+c)$. This bound is increasing in $c$ and decreasing in $d_0$, so using the estimates $c\ge C(S)\diam(S)$ and $d_0\le2\diam(S)\mu(S')/\mu(S)$ we obtain
$$C(S',p,q)\ge\left(1-4\frac{\mu(S')}{\mu(S)}\right)\frac{C(S)}{2\frac{\mu(S')}{\mu(S)}+C(S)}.$$
This suffices for part~2 of the lemma and finishes the proof of the lemma.
\end{proof}

The first part of the last lemma is enough to prove Theorem~\ref{fat}.

\begin{proof}[Proof of Theorem~\ref{fat}.]
We cut $S$ into $2^k$ part rsecursively halving it $k$ times as follows. At step $i$ we have cut $S$ into $2^i$ pairwise internally disjoint convex compact subregions of equal area. Then we cut each of these subregions into two parts by a straight line cut perpendicular to a segment connecting two diagonally opposite points of the subregions (that is, two points of the subregion, whose distance is the diameter of the subregion). By Lemma~\ref{sections}/1, the roundness of any subregion remains at least $\min(C(S),1/12)$. The $2^k$ subregions obtained after step $k$ satisfy the requirements of the theorem.
\end{proof}

Lemma~\ref{sections} estimated how cuts in a special direction change the roundness of a convex compact region in the plane. In the next lemma we work with a bisection to equal area parts in an \emph{arbitrary} direction.

\begin{lemma}\label{longcut}
Let $S$ be a compact convex subset of the plane of positive area and let $l$ be a line cutting it into two parts of equal area. Let $S'$ be one of the two parts and let $pq$ be the segment $l\cap S$.
\begin{enumerate}
\item $C(S')\ge C(S)/4$.
\item $d(p,q)\ge\frac{\mu(S)}{\diam(S)}$.
\item The middle half of the segment $pq$ is at a distance at least $\frac{C(S)d(p,q)}{4\sqrt{4+C(S)^2}}$ from the boundary of $S$.
\end{enumerate}
\end{lemma}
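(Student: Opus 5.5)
Part~2 is the easiest, so I will prove it first. Orient $l$ horizontally. Let $h_1$ and $h_2$ be the largest distances from $l$ of points of $S$ above and below $l$. Each half has area $\mu(S)/2$. The half above $l$ lies in a region of width at most $\diam(S)$ and height $h_1$, so this gives $\mu(S)/2\le \diam(S)h_1$; the same holds for $h_2$. Now use convexity instead. The half above $l$ is contained in the trapezoid bounded by $l$, the line parallel to $l$ at height $h_1$, and the two supporting lines of $S$ through $p$ and $q$. Comparing with the cone from the top point, its area is at most $d(p,q)h_1+(\text{extra})$.

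A cleaner route is to look at the cross-section length function $f(t)$, the length of the chord of $S$ parallel to $l$ at signed height $t$. By Brunn--Minkowski in the plane, $f$ is concave, and $f(0)=d(p,q)$. Each half-area is $\int f$ over one side of $0$. On the side where $f$ is nonincreasing from $0$, we get $\mu(S)/2\le f(0)h_i\le d(p,q)\diam(S)$. This gives $d(p,q)\ge\mu(S)/(2\diam(S))$, which is only half the claimed bound.

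To get the full constant I will instead use the inequality $\int_0^{h_1} f\le h_1\max f$ together with the other side. Concavity gives $f\ge$ a linear function, which lets me compare the two sides. I would take the side on which $f$ decreases (one side must be monotone decreasing after its maximum location). For a concave $f$ with $\int_{-h_2}^0 f=\int_0^{h_1}f$, I expect a short one-dimensional argument to show $f(0)\ge \frac{1}{h_1+h_2}\int f\ge \mu(S)/\diam(S)$, since $h_1+h_2\le\diam(S)$. The idea is that the equal-area split point of a concave density has chord at least the average chord, by a Grünbaum-type argument: replace $f$ by the linear or tent function with the same $f(0)$ and halves. I expect this to be the main technical point of part~2.

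For part~1 I will bound $C(S',a,b)$ for distinct $a,b\in S'$, with $l'$ the bisector of $ab$. If the chord $l'\cap S'$ equals $l'\cap S$, then $C(S',a,b)=C(S,a,b)\ge C(S)$. Otherwise $l'$ meets $l$ inside $S$, and $l'\cap S$ is split by $pq$. The task is then to show the part on the side of $S'$ has length at least a quarter of the needed amount. I would use Lemma~\ref{compare}/2 to pass to $D$, via $C(S')\ge D(S')=\mu(S')/\diam(S')^2\ge \frac{\mu(S)}{2\diam(S)^2}=D(S)/2\ge C(S)/4$. This is immediate, since $\diam(S')\le\diam(S)$, so part~1 becomes a one-line consequence of Lemma~\ref{compare}.

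Part~3 uses a point $m$ in the middle half of $pq$. The distance from $m$ to $\partial S$ is the minimum, over supporting lines $L$ of $S$, of the distance from $m$ to $L$. Take such a line $L$. The line $L$ makes some angle with $l$, and $S$ lies on one side of $L$. Using part~2 applied to chords, and Lemma~\ref{compare}, each half of $S$ has a point at height at least $h\ge C(S)d(p,q)/2$ from $l$. To get this, consider the perpendicular bisector of $pq$: its chord has length at least $C(S)d(p,q)$, so some point of it lies at distance at least $C(S)d(p,q)/2$ from $l$ on one side. I would additionally need to argue it for the side facing away from $L$; possibly the whole chord suffices, with tips $r$ above and $s$ below $l$ and $d(r,s)\ge C(S)d(p,q)$.

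Then $S$ contains the quadrilateral $prqs$, so $m$ is at least as far from $\partial S$ as from the boundary of this kite. The kite has diagonals $d(p,q)$ and $d(r,s)\ge C d(p,q)$, where $C=C(S)$, and $r,s$ lie on the bisector. For $m$ at distance at least $d(p,q)/4$ from $p$ and $q$, elementary geometry gives the distance from $m$ to a side such as $pr$. It is at least $\frac{d(p,q)}{4}\sin\angle rpq$. In the worst case $d(t,r)=Cd(p,q)/2$ with $d(t,s)$ the same, which gives $\sin\angle rpq=\frac{C}{\sqrt{1+C^2}}$. Unbalanced $r,s$ would make one side worse, so I would minimize over the split $d(t,r)+d(t,s)\ge Cd(p,q)$. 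The worst case is one tip at height $\ge Cd(p,q)/2$, which gives $\frac{C}{\sqrt{4+C^2}}\cdot\frac{d(p,q)}{4}$, matching the stated bound. The main obstacle is this case analysis on how $r,s$ split.
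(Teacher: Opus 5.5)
Your Part~1 is correct and is exactly the paper's argument. In Part~2 you correctly reduce to showing $f(0)\ge\mu(S)/(h_1+h_2)$ for the concave chord-length function $f$. That inequality is true, but you never prove it, and your hint points the wrong way. A lower bound of $f$ by a linear function gives no lower bound on $f(0)$. What is needed is an \emph{upper} bound on $f$ that holds on both sides of $l$ with one common slope. This is exactly what the trapezoid you wrote down and then abandoned provides. The supporting lines of $S$ through $p$ and $q$ give $f(t)\le d(p,q)+\beta t$ for $-h_2\le t\le h_1$, with a single $\beta$. Hence $\mu(S)/2\le d(p,q)h_1+\beta h_1^2/2$ and $\mu(S)/2\le d(p,q)h_2-\beta h_2^2/2$. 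Multiplying these by $h_2^2$ and $h_1^2$ respectively and adding cancels $\beta$. This gives $d(p,q)\ge\frac{h_1^2+h_2^2}{2h_1h_2}\cdot\frac{\mu(S)}{h_1+h_2}\ge\frac{\mu(S)}{\diam(S)}$, which is the paper's proof.

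Part~3 has a real gap. Let $z$ be the midpoint of $pq$. The distance from the middle half to the side $pr$ of the kite is $\frac{d(p,q)\,d(z,r)}{4\sqrt{d(p,q)^2/4+d(z,r)^2}}$, and likewise for the sides $ps,qs$ with $d(z,s)$. So you need a lower bound on $\min(d(z,r),d(z,s))$, not just on the sum $d(r,s)\ge C(S)d(p,q)$. The sum constraint alone allows $d(z,r)\to0$. In that limit the kite degenerates to the triangle $pqs$ with $pq$ on its boundary, so minimizing over the split gives $0$. Your claimed worst case is therefore wrong. Also, a tip at height $C(S)d(p,q)/2$ would give the factor $C/\sqrt{1+C^2}$, not $C/\sqrt{4+C^2}$. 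The stated constant corresponds exactly to $\min(d(z,r),d(z,s))\ge d(r,s)/4$.

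This is where the equal-area hypothesis is indispensable, and you never use it in Part~3. Without it the statement is false: take a chord of a disk close to the boundary. The paper argues as follows. Say $d(z,r)\le d(z,s)$. The half of $S$ containing $s$ contains the triangle $pqs$, of area $d(p,q)d(z,s)/2$. By convexity and $S\cap l=pq$, the other half lies in the wedge at $s$ spanned by the rays through $p$ and $q$, and below a supporting line of $S$ at $r$. That supporting line cannot be steeper than the lines $rp$, $rq$, since $p,q\in S$. The resulting quadrilateral has area at most $d(p,q)\,d(z,r)\,d(z,s)/(d(z,s)-d(z,r))$, the worst case being the supporting line through $p$ or $q$. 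Equal areas then force $d(z,s)\le3d(z,r)$, that is, $d(z,r)\ge d(r,s)/4\ge C(S)d(p,q)/4$. Substituting this into the distance formula yields the bound $\frac{C(S)d(p,q)}{4\sqrt{4+C(S)^2}}$.
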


\begin{proof}
For part~1, we use only $\diam(S')\le\diam(S)$ and Lemma~\ref{compare}/2:
$$C(S')\ge D(S')=\frac{\mu(S')}{(\diam(S'))^2}\ge\frac{\mu(S)}{2(\diam(S))^2}=\frac{D(S)}2\ge\frac{C(S)}4.$$

Let us use $d_0=d(p,q)$. To see part~2 of the lemma, find the two tangent lines $l_1$ and $l_2$ of $S$ parallel to $l$ and also find tangent lines $l'$ and $l''$ of $S$ going through $p$ and $q$. These four lines bound a possibly degenerate trapezoid containing $S$, see Figure~\ref{fig:equal-area-supports}. Thus, the area of the part of this trapezoid on either side of $l$ is at least  $\mu(S)/2$. If $d_i$ is the distance between $l_i$ and $l$ and $\alpha$ is chosen appropriately (depending on the angles of the trapezoid), the two areas can be expressed as $d_0d_1-\alpha d_1^2$ and $d_0d_2+\alpha d_2^2$. We add up the lower bounds on these areas with coefficients to cancel the terms involving $\alpha$ and get $d_0\ge\frac{d_1^2+d_2^2}{2d_1d_2}\cdot\frac{\mu(S)}{d_1+d_2}\ge\frac{\mu(S)}{\diam(S)}$ as claimed.

For part~3, let $z$ be the midpoint of the segment $pq$ and let the segment $rs$ be the intersection of $S$ and the perpendicular bisector of the segment $pq$, see Figure~\ref{fig:middle-half-kite}. The kite $prqs$ is contained in $S$, so it is enough to bound the distance of the middle half of $pq$ from the four edges of this kite. By symmetry, it is enough to deal with the single edge $pr$. Clearly, the closest point to this edge in the middle half of $pq$ is the midpoint $z^*$ of $pz$. Let $d^*$ stand for the distance of $z^*$ from $pr$. From similar triangles we have
$$\frac{d^*}{d(p,z^*)}=\frac{d(r,z)}{d(p,r)}.$$
From the Pythagorean theorem, $d(p,r)=\sqrt{d(p,z)^2+d(z,r)^2}$. We also have $d(p,z)=d_0/2$ and $d(p,z^*)=d_0/4$.

\begin{figure}[!htbp]
\centering
\begin{minipage}[t]{0.49\textwidth}
\vspace{0pt}\centering
\includegraphics[width=\linewidth]{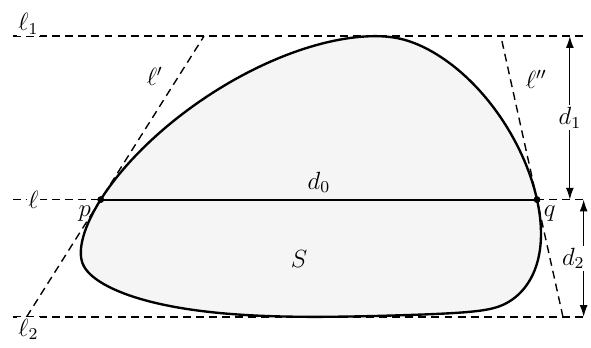}
\caption{}\label{fig:equal-area-supports}
\end{minipage}\hfill
\begin{minipage}[t]{0.49\textwidth}
\vspace{0pt}\centering
\includegraphics[width=\linewidth]{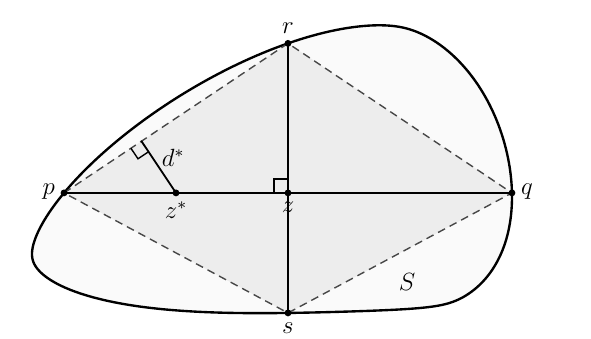}
\caption{}\label{fig:middle-half-kite}
\end{minipage}
\end{figure}

The part of $S$ on one side of $l$ contains the triangle $pqs$, while the part of $S$ on the other side is contained in the possibly degenerate quadrilateral bounded by parts of the lines $l$, $ps$, $qs$ and a tangent of $S$ going through $r$. The area of the triangle is thus at most the area of the quadrilateral. This implies through elementary arguments that $d(r,z)\ge d(r,s)/4$. We further have $d(r,s)=C(S,p,q)d_0\ge C(S)d_0$.

Putting all these estimates together we obtain
$$d^*=\frac{d_0d(r,z)}{4\sqrt{d_0^2/4+d(r,z)^2}}\ge\frac{d_0\cdot C(S)d_0/4}{4\sqrt{d_0^2/4+(C(S)d_0/4)^2}}=\frac{C(S)d_0}{4\sqrt{4+C(S)^2}}$$
as claimed.
\end{proof}

\section{The construction of a finite graph}\label{sec3}

Fix a compact convex region $S_*$ in the plane with roundness $C(S_*)>1/500$. $S_*$ will be a large circular disk in the applications. For any non-negative integer $k$, we will split $S_*$ into $40^k$ compact, convex, pairwise internally disjoint, equal area parts we call the \emph{level $k$ regions}. This process is very similar to the one we used to prove Theorem~\ref{fat}, but slightly more involved to ensure that the directions of the cuts we use are distributed close to uniformly.

The level $k$ regions are denoted by $S_u$ with $u\in I^k$, where $I$ stands for the set of the first $40$ non-negative integers. We use the word \emph{region} exclusively to denote one of these level $k$ regions for some $k$. We define the level $k$ regions by recursion starting with $S_*$ being the sole level $0$ region (here $*$ represents the empty sequence for consistency). If $S_u$ is a  level $k$ region, then we split it into $40$ level $k+1$ regions (the \emph{subregions of $S$}). We do this as follows. First we identify $x,y\in S_u$ with $d(x,y)=\diam(S_u)$ and split $S_u$ into 40 parts of equal area using 39 straight line cuts orthogonal to $xy$. From the 40 parts obtained this way we pick two consecutive ones such that one of them contains the midpoint of the segment $xy$. The other 38 parts obtained are subregions of $S_u$, but we further process the two chosen parts as follows: we split their union by a straight line $l$ of a given direction into two equal area parts. These two parts are called the \emph{central subregions} of $S_u$. They are subregions of $S_u$ and level $k+1$ regions. We denote the subregions of $S_u$ by $S_{uw}$, where $w$ runs through $I$. It remains to define the direction of the line $l$ separating the two central subregions of $S_u$. If $u=u_1u_2\ldots u_k$, then we turn the $x$ axis by the positive angle $\pi\sum_{i=1}^ku_i40^{i-k-1}$ and pick $l$ parallel to the line so obtained.

\begin{lemma}\label{kerek}
\begin{enumerate}
\item All regions $S$ satisfy $C(S)>1/500$;
\item the union $T$ of the two central subregions of a region satisfies $C(T)>1/125$.
\end{enumerate}
\end{lemma}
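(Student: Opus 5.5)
The plan is induction on the level $k$, proving both parts of the lemma together: assuming that a region $S_u$ satisfies $C(S_u)>1/500$, we show that the union $T$ of its two central subregions satisfies $C(T)>1/125$ and that every subregion $S_{uw}$ satisfies $C(S_{uw})>1/500$. The base case is the assumption $C(S_*)>1/500$. All ingredients are already available from Lemma~\ref{sections} and Lemma~\ref{longcut}; the only work is matching the construction to their hypotheses and checking numbers.

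\emph{Non-central subregions.} Each of the $38$ non-central subregions is exactly the part $S'$ of $S_u$ between two lines orthogonal to the diameter segment $xy$, where we may take one of the lines to be a supporting line if the piece is extremal. It has $\mu(S')=\mu(S_u)/40>0$, so it has more than a single point. Lemma~\ref{sections}/1 gives
$$C(S')\ge\min\left(C(S_u),\tfrac{1}{240}\right)>\tfrac1{500}.$$

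\emph{The union $T$ of the two chosen parts.} The set $T$ is again the part of $S_u$ between two lines orthogonal to $xy$. It contains the midpoint of $xy$ and satisfies $\mu(T)/\mu(S_u)=1/20$, so Lemma~\ref{sections}/2 applies with ratio $1/20$:
$$C(T)\ge\min\left(\frac{(4/5)\,C(S_u)}{1/10+C(S_u)},\ \frac1{120}\right).$$
The function $c\mapsto \frac{0.8c}{0.1+c}$ is increasing. At $c=1/500$ it equals $0.0016/0.102\approx0.0157>1/125$, so $C(S_u)>1/500$ makes the first term exceed $1/125$. The second term satisfies $1/120>1/125$. Hence $C(T)>1/125$, which is part~2.

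\emph{The central subregions.} The two central subregions are the halves of $T$ cut by a line into equal areas. Lemma~\ref{longcut}/1 gives $C\ge C(T)/4>1/500$. This completes the induction and part~1.

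There is no real obstacle. The only points needing care are checking that the hypotheses of Lemma~\ref{sections} hold (that $T$ contains the midpoint, which holds by the choice of the two consecutive parts) and keeping every inequality strict through the numerical evaluation.
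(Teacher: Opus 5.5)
Your proposal is correct and is essentially the paper's own proof. The paper also inducts on the level and uses the same three steps: Lemma~\ref{sections}/2 with area ratio $1/20$ gives $C(T)>1/125$, Lemma~\ref{longcut}/1 handles the two central subregions, and Lemma~\ref{sections}/1 with ratio $1/40$ handles the other 38 subregions, with the same numerical checks.
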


\begin{proof}
We prove the part~1 by induction on the level of the region and prove part~2 also during this induction. The base is the sole level $0$ region $S_*$ and we have assumed that it satisfies part~1.

Assume a region $S$ satisfies $C(S)>1/500$ and we prove the same inequality for its subregions and also that $C(T)>1/125$ for the union $T$ of its central subregions.

By Lemma~\ref{sections}/2 we have $C(T)\ge\min((4/5)C(S)/(1/10+C(S)),1/120)>1/125$ proving part~2 for $T$. For a central subregion $S'$ of $S$, Lemma~\ref{longcut}/1 implies $C(S')\ge C(T)/4>1/500$ as needed. Finally, for a non-central subregion $S'$ of $S$ Lemma~\ref{sections}/1 states $C(S')\ge\min(C(S),1/240)>1/500$ as needed.
\end{proof}

We build a discrete point set $P'$ inside $S_*$ as follows: If $S$ is a region of any level with $\mu(S)\ge10^{10}$, then we find the boundary between its two central subregions and place as many equidistant points at unit distances from each other on the middle half of this segment as will fit. Let $P'$ consist of all these points placed for any (large enough) region.

\begin{lemma}\label{P'}
\begin{enumerate}
\item The distance between any two distinct points in $P'$ is at least $1$.
\item Let $k$ be a non-negative integer, let $S$ be a region of area at least $10^{10}\cdot40^k$ and let $L$ be any line in the plane. Then $S$ contains a segment $pq$ of integer length $$d(p,q)\ge\frac{\sqrt{\mu(S)}}{150\cdot40^{k/2}},$$ whose direction makes an angle at most $\pi/40^k$ with the line $L$, such that $P'$ contains all the points of the segment $pq$ at integer distance from $p$, including $p$ and $q$.
\end{enumerate}
\end{lemma}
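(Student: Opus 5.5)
The plan is to prove part~1 from Lemma~\ref{longcut}/3, by showing that points placed for different regions are separated by a wide margin of a region boundary. Part~2 then follows by choosing a suitable descendant region, using the explicit formula for the cut directions.

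\emph{A quantitative estimate used in both parts.} Let $S$ be a region with $\mu(S)\ge10^{10}$ and let $T$ be the union of its two central subregions. Then $\mu(T)=\mu(S)/20$ and $C(T)>1/125$ by Lemma~\ref{kerek}/2. Let $pq$ be the separating segment. By Lemma~\ref{longcut}/2, $d(p,q)\ge\mu(T)/\diam(T)=\sqrt{D(T)\mu(T)}$. Lemma~\ref{compare}/2 gives $D(T)\ge C(T)/2>1/250$, so $d(p,q)\ge\sqrt{\mu(S)/5000}\ge 1400$. Lemma~\ref{longcut}/3 then says that the middle half of $pq$ is at distance at least $C(T)d(p,q)/(4\sqrt{4+C(T)^2})\approx d(p,q)/1000>1$ from the boundary of $T$.

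\emph{Part~1.} Two points on the same segment are at distance at least $1$ by construction. So take points coming from distinct regions $S_1$ and $S_2$. Since regions form a tree of partitions, either $S_1,S_2$ have disjoint interiors or one contains the other.
\begin{itemize}
\item If their interiors are disjoint, the point from $S_i$ lies in the interior of $T_i\subseteq S_i$ at distance $>1$ from $\partial T_i$. Any segment joining the two points crosses both boundaries, so its length exceeds $1$.
\item If $S_2\subsetneq S_1$, then $S_2$ lies inside a single subregion $S'$ of $S_1$. The point from $S_1$ lies on the cut between the central subregions, so it is outside the interior of $S'$. The point from $S_2$ lies in $T_2\subseteq S_2\subseteq S'$ at distance $>1$ from $\partial T_2$, so their distance again exceeds $1$.
\end{itemize}
The only delicate point is checking the constants numerically. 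With $\mu(S)\ge 10^{10}$ the bound $d(p,q)/1000\ge1.4$ holds, so the margin suffices.

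\emph{Part~2.} Let $S=S_u$ with $u$ of length $m$, and consider its level-$(m+k)$ descendants $S_{uw}$ with $w=w_1\dots w_k\in I^k$. By the direction rule, the central cut of $S_{uw}$ makes angle
$$\pi\Bigl(\frac{w_k}{40}+\frac{w_{k-1}}{40^2}+\dots+\frac{w_1}{40^k}\Bigr)+\theta_u$$
with the $x$ axis, where $\theta_u$ depends only on $u$. As $w$ ranges over $I^k$, these angles run through an arithmetic progression of step $\pi/40^k$ covering a full period $\pi$ of line directions. So some descendant $S'=S_{uw}$ has central cut within $\pi/40^k$ of $L$ (in fact within half of that).

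Since $\mu(S')=\mu(S)/40^k\ge10^{10}$, points of $P'$ were placed on the middle half of the central cut of $S'$. By the estimate above, that middle half has length at least $\tfrac12\sqrt{\mu(S')/5000}\ge\sqrt{\mu(S')}/142$. The unit-spaced points on it span a segment of integer length at least this minus $1$, which is $\ge\sqrt{\mu(S')}/150=\sqrt{\mu(S)}/(150\cdot40^{k/2})$ because $\sqrt{\mu(S')}\ge10^5$. Take $p,q$ to be the extreme placed points; then all points at integer distance from $p$ on $pq$ belong to $P'$ by construction.

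I expect the main obstacle to be bookkeeping rather than ideas: getting the direction formula's digit weights right, so that the depth-$k$ descendants really realise all multiples of $\pi/40^k$, and verifying the numerical constants in the distance margin.
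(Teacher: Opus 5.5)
Your proposal is correct and essentially matches the paper's proof. For part~1, the paper likewise uses Lemma~\ref{longcut}/2,3 with $C(T)>1/125$ to get a margin exceeding $1$ around the points placed for the finer region, and notes that the other point lies outside that region's interior (your disjoint-interiors case uses two margins where one suffices); for part~2, it likewise picks the depth-$k$ descendant whose central cut is within $\pi/40^k$ of $L$ and takes the unit-spaced points on the middle half of that cut, with your check of the digit weights and of the constant $150$ being a more explicit version of the same step.
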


\begin{proof}
For part~1 consider the distinct points $p,p'\in P'$ and assume that they are on the boundary between the central subregions of the regions $S$ and $S'$, respectively. We assume without loss of generality that $S$ and $S'$ are level $k$ and level $k'$ regions, respectively with $k\le k'$. If $S=S'$, then the distance $d(p,p')$ is an integer. Otherwise, $p$ must be outside or on the boundary of $S'$. By the estimates in Lemma~\ref{longcut}/2,3 and the bound $C(T)>1/125$ in Lemma~\ref{kerek}/2 for the union $T$ of the two central subregions of $S'$, if $ab$ denotes the segment separating these two central subregions, then $p'$ lies in the middle half of $ab$, and

$$d(p,p')\ge \frac{C(T)d(a,b)}{4\sqrt{4+C(T)^2}}>\frac{d(a,b)}{1001},$$ $$d(a,b)\ge\frac{\mu(T)}{\diam(T)}=\sqrt{D(T)\mu(T)}\ge\sqrt{\frac{C(T)\mu(T)}2}\ge\sqrt{\frac{\mu(S')}{5000}}>1400.$$
For the subsequent (in)equalities, we used $D(T)=\mu(T)/(\diam(T))^2\ge C(T)/2$ from Lemma~\ref{compare}/2, $C(T)\ge1/125$ from Lemma~\ref{kerek}/2, $\mu(T)=\mu(S')/20$, and $\mu(S')\ge10^{10}$.

For part~2, let $S=S_u$ and consider the regions $S_{uv}$ with $v$ running through the sequences in $I^k$. These smaller regions are inside $S$ and have area $\mu(S)/40^k\ge10^{10}$, so we have points in $P'$ on the boundary between their central subregions. Note that the $40^k$ directions of these boundaries are evenly distributed in the full circle, so one of them is within angle $\pi/40^k$ of the line $L$. Let this boundary be the segment $p_0q_0$ separating the central subregions $S'$ and $S''$ of $S_{uv}$. With $T=S'\cup S''$, Lemma~\ref{longcut}/2 yields $d(p_0,q_0)\ge\frac{\mu(T)}{\diam(T)}$ and the same calculation we saw in the proof of part~1 above yields
$$d(p_0,q_0)\ge\frac{\mu(T)}{\diam(T)}\ge\sqrt{\frac{\mu(T)}{250}}=\sqrt{\frac{\mu(S)}{5000\cdot40^k}}>1400.$$

Within the middle half of the segment $p_0q_0$ we find the segment $pq$ satisfying all the requirements of part~2 of the lemma including
$$d(p,q)=\left\lfloor\frac{d(p_0,q_0)}2\right\rfloor\ge\frac{\sqrt{\mu(S)}}{150\cdot 40^{k/2}}$$.
\end{proof}

Let us extend the set $P'$ to a maximal set $P$ of points in $S_*$ with minimum distance at least $1/3$. That is, $P'\subseteq P\subset S_*$, the distance between any two distinct points in $P$ is at least $1/3$ and for any point in $S_*,$ there is a point in $P$ within distance $1/3$. One can find such a set $P$ by greedily adding points to $P'$ violating the last condition one by one until all conditions are satisfied.

We define the \emph{subdivison graph} $H$ on the vertex set $P$ by connecting $p,q\in P$ if and only if $d(p,q)\le1$. Let $d_H(x,y)$ denote the graph distance between $x,y\in P$.

\section{Distance bounds on the finite subdivision graph}\label{sec4}

This calculation is motivated by a similar calculation for the \emph{pinwheel tiling}, see Cai et al., \cite{CaC26}. Our bounds are better than the ones for the pinwheel tiling because the direction of the separating lines are distributed evenly.

We say that $H$ satisfies the \emph{$(m,\varepsilon)$-bound} if for any vertices $x$ and $y$ of $H$ with $d(x,y)\ge m$ we have $d_H(x,y)\le(1+\varepsilon)d(x,y)$. The next lemma establishes that $H$ satisfies some $(m,\varepsilon)$-bound, while the subsequent one improves the error parameter $\varepsilon$ in the bound for the price of increasing the threshold $m$.

\begin{lemma}\label{base2}
$H$ satisfies the $(0,2)$-bound.
\end{lemma}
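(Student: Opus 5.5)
We have to show that $d_H(x,y)\le 3d(x,y)$ for all vertices $x,y\in P$. The natural route is to walk along the segment $xy$ and use the maximality property of $P$, namely that every point of $S_*$ lies within distance $1/3$ of some point of $P$. Since $S_*$ is convex, the whole segment $xy$ lies in $S_*$, so every point of the segment has a nearby vertex.

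\textbf{Short pairs.} If $d(x,y)\le1$, then $x$ and $y$ are adjacent in $H$, so $d_H(x,y)=1$. Because distinct points of $P$ are at distance at least $1/3$, we have $1\le 3d(x,y)$, which settles this case. (For $x=y$ the bound is trivial.)

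\textbf{Long pairs.} Now let $d=d(x,y)>1$. Choose
$$n=\lceil 3d\rceil-2\ge 2,$$
so that $n<3d-1$. Place $n+1$ equally spaced points $z_0=x,z_1,\dots,z_n=y$ on the segment $xy$. Consecutive points are at distance $d/n$. For each $i$ pick $v_i\in P$ with $d(v_i,z_i)\le1/3$, taking $v_0=x$ and $v_n=y$.

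By the triangle inequality,
$$d(v_i,v_{i+1})\le \tfrac{d}{n}+\tfrac{2}{3}.$$
We need $d/n\le 1/3$ for consecutive $v_i$ to be adjacent, which requires $n\ge 3d$. That contradicts $n<3d-1$, so this choice of $n$ is too small for the crude estimate.

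\textbf{Refining the endpoint terms.} The endpoint terms help a little, since $d(v_0,z_0)=0$ and $d(v_n,z_n)=0$, but this is not enough. Instead, choose $n=\lceil 3d\rceil$. Then $d/n\le1/3$, so every $d(v_i,v_{i+1})\le1$ and consecutive $v_i$ are adjacent or equal. This gives
$$d_H(x,y)\le n<3d+1.$$

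To remove the additive $1$, exploit the endpoints: since $v_0=x$ exactly, the first step only needs $d(z_0,z_1)\le 2/3$, and similarly for the last step. So use a non-uniform spacing: the first and last gaps have length up to $2/3$ and the interior gaps have length $1/3$. The number of steps is then $n=2+\lceil 3(d-4/3)\rceil=\lceil 3d\rceil-2\le 3d$ when $d\ge 4/3$. The first step is valid because
$$d(v_0,v_1)\le 2/3+1/3=1,$$
and the last step holds by the same computation.

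\textbf{Remaining range.} For $1<d<4/3$, pick a single midpoint $z$ and its nearby vertex $v$. Then $d(x,v)\le d/2+1/3<1$ and likewise $d(v,y)<1$, so $d_H(x,y)\le 2<3d$.

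\textbf{Main obstacle.} The only real difficulty is bookkeeping the additive constant so as to obtain the factor exactly $1+2=3$. This is handled by the exact endpoint choice $v_0=x$, $v_n=y$ together with the small case analysis above.
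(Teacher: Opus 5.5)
Your final argument is correct and is essentially the paper's proof: the paper places points on $xy$ at distances $2/3,1,4/3,\dots$ from $x$ (first gap $2/3$, interior gaps $1/3$, last gap at most $2/3$). It then picks nearby vertices of $P$ using maximality of $P$ and convexity of $S_*$, and gets a path with $\lceil 3d\rceil-2<3d$ edges; it handles short pairs via the minimum distance $1/3$ exactly as you do. The differences are cosmetic: the paper's indexing ($j=\lceil 3d\rceil-3\ge1$ intermediate points) also covers $1<d<4/3$, so no separate midpoint case is needed, and you should remove the two abandoned uniform-spacing attempts from your write-up.
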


\begin{proof}
The bound is satisfied for $x=y$ with $d(x,y)=d_H(x,y)=0$ and for $0<d(x,y)\le1$ with $d(x,y)\ge1/3$ and $d_H(x,y)=1$. Assume $d(x,y)>1$, set $j=\lceil3d(x,y)\rceil-3$ and place the point $p_i$ on the segment $xy$ at distance $(i+1)/3$ from $x$ for $1\le i\le j$. By the convexity of $S_*$, these points are in $D$, so by the construction of $P$ we find points $x_i\in P$ within distance $1/3$ of $p_i$. By the triangle inequality, we have $d(x,x_1)\le1$, $d(x_i,x_{i+1})\le1$ for $1\le i<j$, and $d(x_j,y)\le 1$, so there is a path connecting $x$ and $y$ through the vertices $x_i$ and so $d_H(x,y)\le j+1<3d(x,y)$ as needed.
\end{proof}

\begin{lemma}\label{step2}
If $H$ satisfies the $(m,\varepsilon)$-bound ($\varepsilon\le2$), then $H$ also satisfies the $(m',\varepsilon')$-bound with $m'=\max(3m,10^{16}\varepsilon^{-3/2})$ and $\varepsilon'=\varepsilon-\varepsilon^{5/2}/10^{14}$.
\end{lemma}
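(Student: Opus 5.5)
Take vertices $x,y$ with $D:=d(x,y)\ge m'$. We build a path that travels mostly along one of the ``highways'' of Lemma~\ref{P'}/2 (unit-spaced points of $P'$ on a long segment, consecutive ones adjacent in $H$), and uses the $(m,\varepsilon)$-bound only on the short approach and departure legs. Along the highway graph distance equals Euclidean length up to $O(1)$, so the saving comes from how much of the route it covers.

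\textbf{Choosing the highway.} Let $\delta$ be a small parameter, to be fixed as a power of $\varepsilon$ ($\delta\approx\varepsilon^{1/2}/C$). First we find a region $S$ that contains a neighbourhood of a middle portion of $xy$ and has diameter of order $\delta D$. Regions of level $j$ have area $\mu(S_*)/40^j$ and, by Lemma~\ref{kerek}/1 and Lemma~\ref{compare}/2, diameter $\Theta(\sqrt{\mu})$. So some region $S$ of the right scale contains the point at distance $D/2$ from $x$ along $xy$. By convexity of $S_*$ the whole segment $xy$ lies in $S_*$, but it may leave $S$; this does not matter, since we only need the highway near that midpoint. Inside $S$ we apply Lemma~\ref{P'}/2 with $L$ the line $xy$ and $k$ chosen so that $\pi/40^k\le\sqrt\varepsilon$, roughly. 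This gives a segment $pq\subseteq S$ with all integer points in $P'$, of length $\ell\ge\sqrt{\mu(S)}/(150\cdot40^{k/2})=\Theta(\delta D\sqrt\varepsilon)$, at angle $\theta\le\pi/40^k$ to $xy$, and lying within distance $O(\delta D)$ of $xy$.

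\textbf{Length comparison.} The route is $x\to p$, then along the highway to $q$, then $q\to y$ (after orienting $pq$ in the direction of $y$). The legs have Euclidean length at least about $D/2-O(\delta D)\ge m$, so the $(m,\varepsilon)$-bound applies; this is where $m'\ge3m$ is used. Hence
\[
d_H(x,y)\le(1+\varepsilon)\bigl(d(x,p)+d(q,y)\bigr)+\ell .
\]
Now write $d(x,p)+\ell+d(q,y)-D$, the Euclidean detour. It is $O\bigl((\delta D)^2/D\bigr)+O(\ell\theta^2)$: the first term from the lateral offset of the highway, the second from its tilt. The total is therefore at most
\[
(1+\varepsilon)D-\varepsilon\ell+O(\delta^2D+\ell\theta^2).
\]
We need $\varepsilon\ell\gtrsim\varepsilon\cdot\delta\sqrt\varepsilon\,D$ to dominate $\delta^2D+\ell\theta^2$. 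Taking $\theta^2\le\varepsilon/2$ and $\delta=c\,\varepsilon^{1/2}$ gives a gain of order $\varepsilon^{5/2}D$, matching $\varepsilon'=\varepsilon-\varepsilon^{5/2}/10^{14}$. The exponent $5/2$ arises as $\varepsilon\cdot\delta\cdot\sqrt{\varepsilon}$, where $\sqrt\varepsilon$ comes from the area loss factor $40^{-k/2}$ with $40^{k}\approx\varepsilon^{-1/2}$.

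\textbf{Technicalities.} Lemma~\ref{P'}/2 requires $\mu(S)\ge10^{10}\cdot40^k$, which is $(\delta D)^2\gtrsim10^{10}\varepsilon^{-1/2}$. This is where $m'\ge10^{16}\varepsilon^{-3/2}$ enters, and it also makes $\ell\gg1$, so integer rounding and the $O(1)$ losses at the highway endpoints are negligible. One must also handle $x,y$ near $\partial S_*$: if no region of the right scale sits near the midpoint of $xy$, the midpoint is still in $S_*$, so a region of the desired level contains it and convexity keeps everything inside $S_*$.

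\textbf{Main obstacle.} The hardest step is controlling where $pq$ sits inside $S$. Its lateral offset from $xy$ is bounded only by $\diam(S)$, which forces $\delta$ (the region scale relative to $D$) to be about $\sqrt\varepsilon$, so that the offset cost $\delta^2D$ is beaten by the gain $\varepsilon\ell$. Balancing these three quantities with explicit constants is the delicate part.
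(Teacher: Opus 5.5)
Your architecture is the same as the paper's: a region $S$ around the midpoint $z$ of $xy$, a nearly parallel run of unit-spaced $P'$-points from Lemma~\ref{P'}/2, the $(m,\varepsilon)$-bound on the two legs, and a Pythagorean estimate for the lateral offset. \textbf{The gap is in the scale of $S$}, which is the whole quantitative content of the lemma.

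By your own accounting, $d_H(x,y)\le(1+\varepsilon)D-\varepsilon\ell+O(\delta^2D+\ell\theta^2)$. Since $p,q\in S$, $\ell\le\diam(S)\approx\delta D$, so the gain is at most about $\varepsilon\delta D$. The offset term is of order $\delta^2D$, because the offset of $pq$ from the line $xy$ is controlled only by $\diam(S)$. Any improvement therefore needs $\delta\ll\varepsilon$. With your choice $\delta\approx\varepsilon^{1/2}/C$, the offset loss is of order $\varepsilon D/C^2$. This swamps the gain, which is at most of order $\varepsilon^{3/2}D/C$, for small $\varepsilon$, and small $\varepsilon$ is exactly the regime the iteration needs. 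Your closing claim that the offset ``forces $\delta$ to be about $\sqrt\varepsilon$'' has the comparison backwards.

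There are also two arithmetic slips. With $40^k\approx\varepsilon^{-1/2}$, the factor $40^{-k/2}$ is $\varepsilon^{1/4}$, not $\varepsilon^{1/2}$. And even with your numbers, $\varepsilon\cdot\delta\cdot\sqrt\varepsilon$ at $\delta=\varepsilon^{1/2}$ is $\varepsilon^2$, not $\varepsilon^{5/2}$.

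The correct balance sets the gain $\approx\varepsilon\cdot40^{-k/2}\delta D\approx\varepsilon^{5/4}\delta D$ against the loss $\approx\delta^2D$. This forces $\delta\approx c\,\varepsilon^{5/4}$ with a small constant $c$, and leaves a net gain of order $\varepsilon^{5/2}D$.

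The paper implements this by taking $S$ to be the smallest region containing $z$ with $\mu(S)\ge aD^2$, where $a=\varepsilon^2/(10^{16}\cdot40^k)$, so that $aD^2\le\mu(S)<40aD^2$.
\begin{itemize}
\item The lower bound on $\mu(S)$ gives $(\varepsilon-(1+\varepsilon)(1-\cos\theta))d(p,q)\ge\varepsilon^2D/(2\cdot10^{10}\cdot40^k)$.
\item The upper bound gives offset loss at most $3000(1+\varepsilon)\mu(S)/D\le36\varepsilon^2D/(10^{12}\cdot40^k)$.
\item Finally, $40^k<\sqrt{96\cdot10^3/\varepsilon}$ converts the difference into $\varepsilon^{5/2}D/10^{14}$.
\end{itemize}

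The same scale explains the threshold in $m'$. The hypothesis $\mu(S)\ge10^{10}\cdot40^k$ of Lemma~\ref{P'}/2 becomes $D\gtrsim\varepsilon^{-3/2}$, which is where $10^{16}\varepsilon^{-3/2}$ comes from. With your $\delta\approx\varepsilon^{1/2}$, the same computation would give only $D\gtrsim\varepsilon^{-3/4}$, another sign that your scale is off.
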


\begin{proof}
Let $k$ be a positive integer such that $60/\varepsilon\le40^{2k}<96\cdot10^3/\varepsilon$ and set $a=\frac{\varepsilon^2}{10^{16}\cdot40^k}$.

Let $x$ and $y$ be vertices of $H$ with $d_0:=d(x,y)\ge m'$. Our goal is to prove $d_H(x,y)\le(1+\varepsilon')d_0$. Let $z$ be the midpoint of the segment $xy$. Note that $x$ and $y$ are in the convex region $S_*$, so $z\in S_*$ and thus covered by a region of every level. Let $S$ be the smallest region containing $z$ with $\mu(S)\ge ad_0^2$. Note that $\mu(S_*)=D(S_*)\diam(S_*)^2\ge d_0^2/1000$, so $S_*$ is large enough and therefore the minimal large-enough region $S$ containing $z$ exists. The minimality implies that $\mu(S)<40ad_0^2$. Using Lemma~\ref{P'}/2, we find $p,q\in S\cap P'$ with $d_H(p,q)=d(p,q)\ge\sqrt{\mu(S)}/(150\cdot40^{k/2})$ and the angle between the lines $xy$ and $pq$ at most $\pi/40^k$. For the lemma to be applicable here, we need $\mu(S)\ge10^{10}\cdot40^k$ which follows from $\mu(S)\ge ad_0^2$, $d_0\ge m'\ge10^{16}\varepsilon^{-3/2}$, and $40^{2k}<96\cdot10^3/\varepsilon$. See Figure~\ref{fig:finite-step}.

\begin{figure}[!htbp]
\centering
\includegraphics[width=0.62\textwidth]{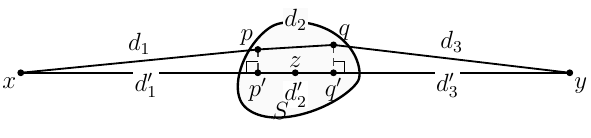}
\caption{}\label{fig:finite-step}
\end{figure}

Let $d_1=d(x,p)$, $d_2=d(p,q)$ and $d_3=d(q,y)$.
Notice that $\diam(S)=\sqrt{\mu(S)/D(S)}\le\sqrt{1000\mu(S)}\le200\sqrt ad_0\le d_0/6$ by Lemmas~\ref{kerek}/1 and \ref{compare}/2 and therefore the segment $pq$ is relatively close to the midpoint $z$ of the segment $xy$. Recall that $pq$ is close to parallel to $xy$. Switch the roles of $p$ and $q$, if necessary to ensure that $p$ is closer to $x$ than $q$. We have $d_1\ge d(x,z)-d(z,p)\ge d_0/2-\diam(S)\ge d_0/3\ge m$. Similarly, $d_3\ge d_0/3\ge m$ and so we can apply the $(m,\varepsilon)$-bound on both pairs of vertices. We obtain
$$d_H(x,y)\le d_H(x,p)+d_H(p,q)+d_H(q,y)\le(1+\varepsilon)d_1+d_2+(1+\varepsilon)d_3.$$

Let $p'$ and $q'$ be the orthogonal projections of $p$ and $q$ to the line $xy$ and let $d_1'=d(x,p')$, $d_2'=d(p',q')$, and $d_3'=d(q',y)$. Clearly,
$$d_0=d_1'+d_2'+d_3'.$$
Here $d_2'=d_2\cos\theta$, where $\theta<\pi/40^k$ is the angle between $xy$ and $pq$. We have $d(p',z)\le d(p,z)\le d_0/6$, so $d'_1\ge d_0/3$. Using $d(p,p')\le d(p,z)\le\diam(S)\le\sqrt{1000\mu(S)}$ and the Pythagorean theorem, we obtain $$d_1=\sqrt{d_1'^2+d(p,p')^2}\le\sqrt{d_1'^2+1000\mu(S)}<d_1'+\frac{1000\mu(S)}{2d_1'}\le d'_1+1500\frac{\mu(S)}{d_0}.$$
By symmetry, we also have
$$d_3<d_3'+1500\frac{\mu(S)}{d_0}.$$
Combining all these estimates, we obtain

\begin{eqnarray*}
d_H(x,y)&\le&(1+\varepsilon)(d_1+d_3)+d_2\\
&\le&(1+\varepsilon)\left(d_0-d_2'+3000\frac{\mu(S))}{d_0}\right)+d_2\\
&=&(1+\varepsilon)d_0-(\varepsilon-(1+\varepsilon)(1-\cos\theta))d_2+3000(1+\varepsilon)\frac{\mu(S)}{d_0}\\
\end{eqnarray*}

In the last formula $(1+\varepsilon)d_0$ is the estimate the $(m,\varepsilon)$-bound gives for $d_H(x,y)$. We need to estimate the other two terms. We use $(1+\varepsilon))\le3$, $1-\cos\theta\le\theta^2/2\le\pi^2/(2\cdot40^{2k})$, and $40^{2k}\ge60/\varepsilon$ to conclude $(1+\varepsilon)(1-\cos\theta)\le \varepsilon/4$. This allows us to estimate
$$(\varepsilon-(1+\varepsilon)(1-\cos\theta))d_2\ge\frac{3\varepsilon}4 d_2\ge\frac{\varepsilon\sqrt{\mu(S)}}{200\cdot40^{k/2}}\ge\frac{\varepsilon\sqrt ad_0}{200\cdot40^{k/2}}=\frac{\varepsilon^2}{2\cdot10^{10}\cdot40^k}d_0.$$
We use $\mu(S)<40ad_0^2$ to estimate the last term:
$$3000(1+\varepsilon)\frac{\mu(S)}{d_0}\le36\cdot10^4ad_0=\frac{36\varepsilon^2}{10^{12}\cdot40^k}d_0.$$
We can now finish the proof of the lemma by returning to the estimate for $d_H(x,y)$:
\begin{eqnarray*}
d_H(x,y)&\le&(1+\varepsilon)d_0-(\varepsilon-(1+\varepsilon)(1-\cos\theta))d_2+3000(1+\varepsilon)\frac{\mu(S)}{d_0}\\
&\le&(1+\varepsilon)d_0-\frac{\varepsilon^2}{2\cdot10^{10}\cdot40^k}d_0+\frac{36\varepsilon^2}{10^{12}\cdot40^k}d_0\\
&=&(1+\varepsilon)d_0-\left(\frac{1}{2\cdot10^{10}}-\frac{36}{10^{12}}\right)\frac{\varepsilon^2}{40^k}d_0\\
&\le&(1+\varepsilon)d_0-\frac{\varepsilon^{5/2}}{10^{14}}d_0=(1+\varepsilon')d_0
\end{eqnarray*}
\end{proof}

Let us define $(m_i,\varepsilon_i)$ recursively starting at $(m_0,\varepsilon_0)=(0,2)$ and letting $(m_{i+1},\varepsilon_{i+1})=(m',\varepsilon')$ as obtained in Lemma~\ref{step2} from $(m,\varepsilon)=(m_i,\varepsilon_i)$. The graph $H$ satisfies the $(m_i,\varepsilon_i)$-bound by Lemmas~\ref{base2} and \ref{step2}. The values $m_i$ and $\varepsilon_i$ are determined by the formulae in Lemma~\ref{step2} and are independent of the size of our starting region $S_*$. The $(m_i,\varepsilon_i)$-bound holds for $H$ for any $i$ but as $m_i$ grows beyond $\diam(S_*)$ it becomes meaningless. Nevertheless, it is useful to calculate the growth rate of $m_i$ and $\varepsilon_i$. The recursion $\varepsilon_{i+1}=\varepsilon_i-\Theta(\varepsilon_i^{5/2})$ ensures $\varepsilon_i=\Theta(i^{-2/3})$. The recursion $m_{i+1}=\max(3m_i,O(\varepsilon_i^{-3/2}))$ and $m_1>0$ imply $m_i=\Omega(3^i)$, which, in turn, implies that $m_{i+1}=3m_i$ for all but finitely many indices $i$ and so $m_i=\Theta(3^i)$.


\section{The infinite graph--Proof of Theorem~\ref{generalizedpinwheel}}\label{sec5}

Let us denote by $B_r(p)$ the ball of radius $r$ around the point $p$ in the plane. We use $O$ to denote the origin.

Consider any sequence of positive numbers $r_j$ ($j\ge1$) that tends to infinity and let us start the construction described in the previous section with $S_*=B_{r_j}(O)$. Let $H_j$ be the subdivision graph so obtained and $P_j$ its vertex set. Let us number the vertices in $H_j$ in order of their distance from the origin (breaking ties arbitrarily) to obtain $P_j=\{p_j^1,p_j^2,\dots,p_j^{|P_j|}\}$.

As the balls $B_{1/3}(p_j^i)$ for fixed $j$ and $1\le i\le|P_j|$ cover $B_{r_j}(O)$, $|P_j|$ must tend to infinity with $j$. For a fixed index $i$, $p_j^i$ must therefore be defined for all large enough $j$. The $i_0$ balls $B_{1/3}(p_j^i)$ for fixed $j$ and $i\le i_0$ must cover the interior of $B_r(O)$ for $r=d(O,p_j^{i_0})-1/3$ because we indexed the points in increasing order of their distance from the origin, and therefore $d(O,p_j^{i_0})$ must be bounded for each fixed $i_0$. The compactness of these bounded balls implies that for a subsequence $j_1<j_2<\cdots$ the sequences $p_{j_k}^i$ must be convergent for all $i$. Let $p^i$ stand for the limit of the sequence $P_{j_k}^i$ and let $P=\{p^i\mid i\ge1\}$. Finally, let $H$ be the infinite graph on the vertex set $P$ with two vertices adjacent if and only if their distance is at most $1$.

\begin{proof}[Proof of Theorem~\ref{generalizedpinwheel}]
We claim that the set $P$ and graph $H$ constructed above satisfy the requirements listed in the theorem.

Part~1 follows from $d(p^{i_1}p^{i_2})=\lim_kd(p^{i_1}_{j_k},p^{i_2}_{j_k})$ and $d(p_{j_k}^{i_1}p_{j_k}^{i_2})\ge1/3$ whenever $i_1\ne i_2$.

Let $p$ be any point in the plane. As $B_{r_j}(O)$ contains $p$ for all large enough $j$ there must be a point $p^{i_j}_j$ within distance $1/3$ of $p$ for all these indices $j$. The balls $B_{1/6}(p_j^i)$ are internally disjoint for any fixed $j$ and for $i\le i_j$ they are inside $B_r(O)$ for $r=d(O,p)+1/2$. Therefore $i_j$ must be bounded and thus there exists a value $i$ such that $i_{j_k}=i$ for infinitely many $k$. But then $d(p,p^i)=\lim_kd(p,p^i_{j_k})\le1/3$ proving part~2.

The inequality $d_H(p,q)\ge d(p,q)$ in part~3 is a consequence of no edge of $H$ connecting vertices of distance greater than $1$.

It remains to prove the upper bound on the graph distance in part~3. Let $p=p^{i_1}$ and $q=p^{i_2}$ be two distinct vertices of $H$. Recall the sequences $m_i$ and $\varepsilon_i$ defined at the end of the last section and let $i_0$ be the largest index with $m=m_{i_0}<d(p,q)$ and let $\varepsilon=\varepsilon_{i_0}$. For large enough $k$, the vertices $p_{j_k}^{i_1}$ and $p_{j_k}^{i_2}$ exist and their distance is larger than $m$ as $m<d(p,q)=\lim_kd(p^{i_1}_{j_k},p^{i_2}_{j_k})$. As the graph $H_{j_k}$ satisfies the $(m,\varepsilon)$-bound, we have $d_{H_{j_k}}(p^{i_1}_{j_k},p^{i_2}_{j_k})\le(1+\varepsilon)d(p^{i_1}_{j_k},p^{i_2}_{j_k})$. In the limit, this gives $d_H(p,q)\le(1+\varepsilon)d(p,q)$ because $d(p,q)=\lim_kd(p^{i_1}_{j_k},p^{i_2}_{j_k})$ and $d_H(p,q)\le\liminf_kd_{H_{j_k}}(p^{i_1}_{j_k},p^{i_2}_{j_k})$. To see the latter inequality, it is enough to notice that if the graph distance is some fixed value $c$ for infinitely many graphs $H_{j_k}$, then the length $c$ path connecting $p^{i_1}_{j_k}$ and $p^{i_2}_{j_k}$ must go through vertices in a bounded distance from the origin, therefore vertices $p^l_{j_k}$ for bounded indices $l$, and thus it must be the path $p^{l_0}_{j_k}p^{l_1}_{j_k}\cdots p^{l_c}_{j_k}$ for the same indices $i_1=l_0,l_1,\dots,l_c=i_2$ for infinitely many values of $k$. But then $p^{l_0}p^{l_1}\cdots p^{l_c}$ is a length $c$ path connecting $p$ and $q$ in $H$.

We saw that $m_i=\Theta(3^i)$ and $\varepsilon_i=\Theta(i^{-2/3})$. This shows that $i_0=\Theta(\log(d(p,q)))$ and, thus, $$\varepsilon_{i_0}=\Theta(\log^{-2/3}(d(p,q)))$$ giving us the bound claimed.
\end{proof}

\section{Proof of Theorem~\ref{best}}\label{sec6}
This section follows the proof of the main result in the paper \cite{CaC26} with minor, but important, adjustments. Note that the original result used weights between $\sqrt2/2$ and $2$ on the edges of the square grid to achieve graph distances close to the Euclidean distances. We ``discretize'' this by using just two possible weights, which can be subsequently turned into an unweighted graph. The original result included a somewhat higher, but still polynomial error term. With a little effort we obtain a smaller error term, but note that the exponent can be further decreased, for example using recursive bounds for some of the connecting intervals in the proof below. We will often refer to the proof in \cite{CaC26}.

\begin{theorem}\label{ab}
Let $0\le a\le\sqrt2/2$ and $b>1$ be reals. There is an assignment of lengths to the edges of the standard unit square grid graph in the plane, where the length of each edge is either $a$ or $b$ and for the resulting weighted graph $H$ and vertices $u$ and $v$ we have
$$d(u,v)-O(1)\le d_H(u,v)=d(u,v)+O(d(u,v)^c),$$
where $c=(11+4\sqrt6)/25$. The hidden constants in the asymptotic notation depend on the choice of $a$ and $b$.
\end{theorem}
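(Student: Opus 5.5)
We follow the highway construction of Cai et al.~\cite{CaC26}, changing only how the cheap and expensive weights are realized. Every edge of the unit grid gets weight $b$, except the edges on a hierarchy of \emph{highways}, which get weight $a$. A highway is a digital staircase approximating a straight segment of slope $\tan\phi$. It alternates horizontal and vertical unit steps so that it stays within $O(1)$ of the segment. Traversing it costs $a$ times its $\ell_1$ length, which is $a(|\cos\phi|+|\sin\phi|)\ell\le a\sqrt2\,\ell\le\ell$ for a segment of length $\ell$. The condition $a\le\sqrt2/2$ is exactly what makes highway travel no more expensive than Euclidean length in every direction. Along the highway we place cheap \emph{off-ramp} detours of weight $a$, so that a traveller can slow down to Euclidean speed: by mixing highway steps with short detours we can make the cost of a length-$\ell$ stretch equal to $\ell+O(1)$.

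The construction is hierarchical. At scale $L_k=L_0^{\,\lambda^k}$ (or geometrically growing scales), for each of $N_k$ evenly spaced directions we lay a family of parallel highways with spacing $s_k$. Here $N_k$ and $s_k$ are polynomial in $L_k$, with exponents to be optimized. Where highways of different families cross, one is interrupted by $O(1)$ grid edges of weight $b$. This is the source of the $b>1$ dependence in the constants. The number of crossings per unit length is chosen so that the total interruption cost stays below the target error.

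\textbf{Upper bound.} Given $u,v$ at distance $d$, choose the level $k$ with $L_k\approx d$ and the direction closest to $uv$, which has angular error $\theta\lesssim 1/N_k$. Walk from $u$ to a nearby highway of that family at distance $\le s_k$, ride it, and exit near $v$. The cost is
\[
d\cdot\bigl(1+O(\theta^2)\bigr)+O(s_k)+(\text{crossing penalties}).
\]
The two connector pieces of length $O(s_k)$ are handled recursively using lower-level highways, so they cost $O(s_k)$ with error at most $O(s_k^{c})$, rather than being bounded by $b\,s_k$ crudely. Balancing $d\theta^2$, $s_k$ and the crossing density against the requirement in the lower bound gives an exponent. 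Optimizing the parameters should yield $c=(11+4\sqrt6)/25$, whose value comes from a quadratic balancing equation; I expect to reproduce it by copying the parameter choice of \cite{CaC26}.

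\textbf{Lower bound.} This is the main obstacle: we must show $d_H\ge d-O(1)$. A highway edge has weight $a$ but moves one unit in $\ell_1$, so along a diagonal staircase its per-unit Euclidean progress is $1/\sqrt2\ge a$. The risk is that off-ramp detours or highway junctions create shortcuts. I would first prove the local inequality: any path confined to a single highway, or to a highway together with its ramps, between points $x,y$ has weight at least $d(x,y)-O(1)$. Here the $O(1)$ comes from the staircase deviation from the straight segment, and the slowing ramps only add weight. Then I would argue that switching between highways requires traversing edges of weight $b>1\ge$ their Euclidean length, so a path decomposes into highway pieces joined by ordinary edges, each piece costing at least its Euclidean displacement.

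The difficulty is that summing the $O(1)$ defects over many pieces could give an unbounded total. To prevent this, I would design the highways so that each single piece has defect $\le 0$ except near its endpoints. Concretely, the staircase is chosen so that the weight of every sub-staircase is at least the Euclidean distance between its endpoints minus $\delta$, and the detour needed to switch highways costs at least $\delta$ extra because $b>1$. Fixing this local accounting, probably with a potential-function argument as in \cite{CaC26}, is where I expect most of the work to lie.
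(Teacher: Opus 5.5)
There is a genuine gap, and it comes from giving every highway edge weight $a$. With that weighting a highway is \emph{faster} than Euclidean travel, and nothing you add afterwards can slow a shortest path down: off-ramp detours are optional, and adding edges can only decrease distances. Concretely, take $u,v$ on a highway of slope $s$ and put $\tau=\sqrt{s^2+1}/(|s|+1)\in[\sqrt2/2,1]$. The highway path between them costs $a\,d_G(u,v)\le a(d(u,v)+1)/\tau$. For a horizontal highway this is at most $(\sqrt2/2)(d(u,v)+1)$. For $a=0$, which the hypothesis allows, every highway (detours included) is free, so $d_H(u,v)=0$ for arbitrarily distant $u,v$.

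So the local inequality you planned to prove is false for your weighting. Your remark that the per-step progress $1/\sqrt2$ is at least $a$ says precisely that highways are too cheap. The hypotheses are used the other way around. Since $a\le\sqrt2/2\le\tau\le1<b$, every $\tau$ lies between $a$ and $b$. The paper therefore gives the edges of each highway a \emph{mixture} of weights $a$ and $b$, chosen greedily from one end $u_0$: the next edge gets $a$ iff the weight accumulated so far is at least $\tau$ times the number of steps so far. This keeps $d_P(u_0,v)-\tau d_G(u_0,v)$ in $[-b,b]$. Combined with $|\tau d_G(u,v)-d(u,v)|\le1$ along a digital line (Lemma~3.1 of \cite{CaC26}), it gives $|d_P(u,v)-d(u,v)|\le 2b+1$ on every highway; all other edges get weight $b$. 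This $(a,b)$-weighting is the missing idea.

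The rest of your accounting also needs the paper's specific choices.

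The per-highway defect $2b+1$ is paid for by keeping all highway segments pairwise at distance at least $m_1$, where $(b-1)(m_1-2)>2b+1$. Then every connecting piece between two different highways, made of weight-$b$ edges, carries a surplus larger than that defect. Your $\delta$-idea is of this type, but interrupting a highway by $O(1)$ edges at a crossing does not achieve it. Two highways meeting at angle about $\pi/N_k$ stay within $O(1)$ of each other along a stretch of length $\Theta(N_k)$, and lower-level highways may run parallel and close to higher-level ones. The paper therefore deletes from each level-$i$ line the $m_i$-neighborhoods of the other level-$i$ lines and of all higher-level segments, creating gaps of length $O(m_ik_i)$.

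Your upper-bound route also falls short. You ride a single highway whose direction is off by $\theta$, with connectors of length $O(s_k)$, but this does not reach $v$. The exit leg can have length $s_k+d\sin\theta$, which costs $\Theta(d\theta)$, not $O(d\theta^2)$. The paper instead uses two lines of Lines$(i)$, rotated from $pq$ by opposite angles at most $\pi/k_i$, one within $k_i^3m_i$ of $p$ and the other within $k_i^3m_i$ of $q$, meeting at a grid point $r$. Then $d(p',r)+d(r,q')\le d(p,q)/\cos(\pi/k_i)+O(k_i^3m_i)$, and the total cost is $d+O(k_i^3m_i+d/k_i^2)$. With $k_i=\lceil m_i^{\alpha}\rceil$, $m_{i+1}=k_i^2m_i$ and $\alpha=\sqrt6/6$, this yields $c=(11+4\sqrt6)/25$ without any recursive treatment of the connectors.
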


\begin{proof}
Let us denote the unweighted square grid graph by $G$. Following the paper \cite{CaC26}, we identify \emph{highways} corresponding to segments in the plane. Let $I$ be a segment in the plane, and let the corresponding highway, Highway$(I)$ be defined as the subgraph of $G$ induced by the vertices $v=(x,y)$ such that the square $[x-1/2,x+1/2)\times[y-1/2,y+1/2)$ intersects $I$. We can similarly assign Highway$(\ell)$ to a full line $\ell$. As observed in \cite{CaC26},  Highway$(I)$ is a path. By Lemma~3.1 of \cite{CaC26}, $|\tau d_G(u,v)-d(u,v)|\le1$ for any two vertices $u,v\in$ Highway$(I)$, where $\tau=\sqrt{s^2+1}/(|s|+1)$ and $s$ is the slope of $I$, that is, the line of $I$ is given by $y=sx+c_0$ (for vertical $I$, $\tau=1$). Note that $\sqrt2/2\le \tau\le1$.

The observation above tells us that assigning length $\tau$ to all edges in Highway$(I)$, the resulting weighted graph-distance approximates the Euclidean distance. We use $a\le \tau<b$ to use a different weighting we call the \emph{$(a,b)$-weighting} of Highway$(I)$. We obtain the weighted path $P$ by assigning length $a$ or $b$ to the edges of Highway$(I),$ recursively, starting at one end vertex $u_0$. After assigning lengths to the edges in the path from $u_0$ to some vertex $v$ in Highway$(I)$, we look at the following edge $vw$ in the path (if exists) and assign to it the length $a$ if $d_P(u_0,v)\ge \tau d_G(u_0,v)$ and the length $b$, otherwise. Clearly, we have $$d_P(u_0,w)-\tau d_G(u_0,w)=d_P(u_0,v)-\tau d_G(u_0,v)+a-\tau$$ in the former case, and $$d_P(u_0,w)-\tau d_G(u_0,w)=d_P(u_0,v)-\tau d_G(u,v)+b-\tau$$ in the latter. Thus, the inequalities $$-b\le a-\tau\le d_P(u_0,v)-\tau d_G(u_0,v)\le b-\tau\le b$$ will be maintained throughout the path. For any two vertices $u,v\in$ Highway$(I)$, we have
\begin{eqnarray*}
|d_P(u,v)-d(u,v)|&=&\big||d_P(u_0,u)-d_P(u_0,v)|-d(u,v)\big|\\
&\le&\big||\tau d_G(u_0,u)-\tau d_G(u_0,v)|-d(u,v)\big|+2b\\
&=&|\tau d_G(u,v)-d(u,v)|+2b\\
&\le&2b+1.
\end{eqnarray*}

With $\alpha=\sqrt6/6,$ we set the parameters $k_i$ and $m_i$ recursively starting at $m_1$ large enough to ensure $(b-1)(m_1-2)>2b+1$ and setting $k_i=\lceil m_i^\alpha\rceil$, $m_{i+1}=k_i^2m_i$.

The \emph{$d$-neighborhood} of a planar set $S$ means the set of points that are within distance $d$ from $S$ (the Minkowski sum of $S$ and the closed radius $d$ ball around the origin). Following \cite{CaC26} we define the lines
$$\ell_{i,j,t}=\{(x,y)\mid y\cos\frac{j\pi}{k_i}=x\sin\frac{j\pi}{k_i}+(2t+1)k_i^3m_i\},$$
for $i=1,2,\dots$, $j=1,2,\dots k_i$ and $t\in\ZZ$. Let Lines$(i)=\{\ell_{i,j,t}\}$. This set contains lines in $k_i$ equidistant directions in the plane, with lines in regular distances of $2k_i^3m_i$ in each of these directions. Note that all lines in Lines$(i)$ are at a minimum distance of $k_i^3m_i$ from the origin. We define Segments$(i)$ as a collection of intervals obtained from Lines$(i)$ by the following process. For each $\ell\in$ Lines$(i)$ remove from $\ell$ the $m_i$-neighborhood of each line $\ell'\in$ Lines$(i)$, $\ell'\ne\ell$ and also the $m_i$-neighborhood of each segment in Segments$(j)$ for $j>i$. Whenever the $m_i$-neighborhood of a segment in Segments$(j)$ intersects $\ell$ in a nonempty segment of length shorter than $m_i$, remove a segment of length $m_i$ including the intersection. Whatever segments remain in $\ell$ for any line $\ell\in$ Lines$(i)$, constitute the set Segments$(i)$. This seems to be a badly formed recursion as Segments$(i)$ is defined using the sets Segments$(j)$ for all $j>i$. This problem can be fixed by restricting our attention to indices $i\le M$ in this hierarchy, then applying a compactness argument as $M$ goes to infinity, as done in \cite{CaC26}. Alternatively (and we choose this approach here), we notice that for $j>i$, any segment in Segments$(j)$ affects directly only its $2m_i$-neighborhood when defining Segments$(i)$, so even indirectly through intermediate sets Segments$(j')$ with $i<j'<j$, it only affects points in its $(2\sum_{j'=i}^{j-1}m_{j'})$-neighborhood. As points in Lines$(j)$ are at a minimum distance $k_j^3m_j$ from the origin, Segments$(j)$ has no effect on whether a point $p$ is removed from a line $\ell\in$ Lines$(i)$ in the process if $$d(p,O)<k_j^3m_j-2\sum_{j'=i}^{j-1}m_{j'}.$$ Because of the fast growth of $k_j$ and $m_j$, this will be true for any point $p$ and any large enough index $j$, so whether $p$ is removed from $\ell$ is well defined for any $p\in\ell\in$ Lines$(i)$. This makes Segments$(i)$ well defined.

In a direct analog of Lemma~3.2 in \cite{CaC26}, we observe that the distance between any two distinct segments in $\cup_{j\ge i}$Segments$(j)$ is at least $m_i$.

To obtain the weighted graph $H$, we assign lengths to the edges of the square grid $G$ as follows. For Highway$(I)$ with $I\in$ Segments$(i)$, $i\ge1$, we use its $(a,b)$-weighting, for all edges outside these highways we assign the length $b$. Note that the pairwise distance of all the segments in $\cup_{i\ge1}$Segments$(i)$ is at least $m_1>2$, so the highways are pairwise disjoint and thus there is no conflict in this definition.

Consider any vertices $p$ and $q$ in $H$. The distance $d_H(p,q)$ is realized by a path in $H$ and this path can be written as $B_0A_1B_1A_2\cdots A_kB_k$, where $A_j$ are maximal subpaths connecting vertices of the same highway without using edges of any other highway, while $B_j$ do not use highway edges and (except possibly for $B_0$ and $B_k$, which may be even empty) they connect vertices of distinct highways. Let $A_j$ connect $s_j$ with $t_j$ belonging to Highway$(I_j)$ for some $I_j\in$ Segments$(i_j)$. Clearly, the length of $A_j$ is at least the length of the path in Highway$(I_j)$ connecting $s_j$ and $t_j$, so at least $d(s_j,t_j)-(2b+1)$. We have $d(t_j,s_{j+1})\ge m_1-2$ as $t_j$ and $s_{j+1}$ are on separate highways and the corresponding intervals are at distance at least $m_1$. The $H$-length of $B_i$ is at least $b$ times the distance between its endpoints, which is at least $2b+1$ more than this distance if $1 \le i<k$. Here we used that the choice of $k_1$ ensures that $(b-1)(m_1-2)>2b+1$. Using this, we get
\begin{eqnarray*}
d_H(p,q)&\ge&d(p,s_1)+(d(s_1,t_1)-(2b+1))+(d(t_1,s_2)+(2b+1))+\cdots\\&&+(d(s_k,t_k)-(2b+1))+d(t_k,q)\\
&=&d(p,s_1)+d(s_1,t_1)+d(t_1,s_2)+\cdots+d(s_k,t_k)+d(t_k,q)-(2b+1)\\
&\ge&d(p,q)-(2b+1).
\end{eqnarray*}
This proves the lower bound in the theorem.
\smallskip

To see the upper bound, consider two distinct vertices $p$ and $q$ in $H$ and we construct a short path connecting them and passing close to two lines in Lines$(i)$. We will optimize for the parameter $i\ge1$ later.

Find line $\ell\in$ Lines$(i)$ such that the direction of $\ell$ is obtained from the line $pq$ by a rotation of a positive angle at most $\pi/k_i$ and its distance from $p$ is at most $k_i^3m_i$. Similarly, let $\ell'\in$ Lines$(i)$ have direction obtained from the line $pq$ by a negative rotation of an angle at most $\pi/k_i$ and let the distance of $q$ from $\ell'$ be at most $k_i^3m_i$. The definition of the set Lines$(i)$ ensures that such lines $\ell$ and $\ell'$ exist. Let $p'$ be the closest vertex in Highway$(\ell)$ to $p$, let $q'$ be the closest vertex in Highway$(\ell')$ to $q$. Note that $\ell$ and $\ell'$ intersect. We choose $r(x,y)$ to be the grid point with the square $[x-1/2,x+1/2)\times[y-1/2,y+1/2)$ containing this intersection point and note that $r$ is on the highway of both lines.

We route the path from $p$ to $q$ through $p'$, $r$ and $q'$. We will use that $d_H(u,v)\le bd_G(u,v)\le \sqrt2bd(u,v)=O(d(u,v))$ holds for any vertices $u$ and $v$ of $H$. Clearly, $d_H(p,p')=O(d(p,p'))=O(k_i^3m_i)$ and similarly, $d_H(q',q)=O(k_i^3m_i)$. We also have
$$d(p',r)+d(r,q')\le d(p,q)/\cos(\pi/k_i)+O(k_i^3m_i)=d(p,q)+O(d(p,q)/k_i^2+k_i^3m_i)$$. It remains to estimate $d_H(p'r)$ and $d_H(r,q')$.

To connect $p'$ and $r$, we try to follow $\ell$ and compare the $H$-distance traveled to how the closest point on $\ell$ progresses in Euclidean distance that we will simply call \emph{the progress}. Note that the total progress of the path connecting $p'$ and $r$ is $d(p',r)+O(1)$. The parts of $\ell$ \emph{not} deleted will result in an interval $I\subset\ell$, $I\in$ Segments$(i)$. Here we follow Highway$(I)$ and the $H$-length of this path is within an additive $O(1)$ from the progress. For segments of $\ell$ in the $m_i$-neighborhood of some other line in Lines$(i)$ we also follow Highway$(\ell)$. These deleted intervals have length $\Omega(m_i)$ and $O(m_ik_i)$ (because the angle between the lines is at least $\pi/k_i$). The deleted intervals from lines of a fixed direction are equal length, are equidistant and cover a ratio of exactly $1/k_i^3$ fraction of the line $\ell$, so they contribute $O(m_ik_i+d(p',r)/k_i^3)$ total distance. Summing for all the $k_i-1$ directions of lines in Lines$(i)$ intersecting $\ell$, we get a total Euclidean length of $O(m_ik_i^2+d(p',r)/k_i^2)$. This also means that the same bound holds for the $H$-length of the corresponding parts of the connecting path. Finally, for the parts of $\ell$ deleted because they are within distance $m_i$ of an interval $I\in$ Segments$(j)$ with $j>i$ (or the $m_i$-length interval containing a shorter intersection) we follow Highway$(I)$ with two $O(m_i)$ length connecting paths. The $H$-length of these intervals of our connecting path is therefore at most $O(m_i)$ plus the progress. As all of the segments in $\cup_{j>i}$Segments$(j)$ are separated by distance at least $m_{i+1}$, we have $O(1+d(p'r)/m_{i+1})$ such deleted intervals. Summarizing and using $m_{i+1}/m_i=k_i^2$:
\begin{eqnarray*}
d_H(p',r)&\le&d(p',r)+O(m_ik_i^2+d(p',r)/k_i^2)+O(m_i+d(p',r)m_i/m_{i+1})\\
&=&d(p',r)+O(d(p',r)/k_i^2+m_ik_i^2).
\end{eqnarray*}
We also have
$$d_H(r,q')=d(r,q')+O(d(r,q')/k_i^2+m_ik_i^2)$$
by symmetry. Putting all these estimates together we get:
\begin{eqnarray*}
d_H(p,q)&\le&d_H(p,p')+d_H(p',r)+d_H(r,q')+d_H(q',q)\\
&=&O(k_i^3m_i)+(d(p',r)+d(r,q'))(1+O(1/k_i^2))\\
&=&d(p,q)+O(k_i^3m_i+d(p,q)/k_i^2).
\end{eqnarray*}
This proves the upper bound in the theorem if $k_i^3m_i+d(p,q)/k_i^2=O(d(p,q)^c)$. There is nothing to prove if $d(p,q)^c<m_1k_1^3=O(1)$. Otherwise, we choose $i$ to be maximal with $k_i^3m_i\le d(p,q)^c$. We have
\begin{eqnarray*}
d(p,q)^c&<&k_{i+1}^3m_{i+1}\\
&=&O(m_{i+1}^{3\alpha+1})\\
&=&O(k_i^2m_i)^{3\alpha+1})\\
&=&O(m_i^{(2\alpha+1)(3\alpha+1)})
\end{eqnarray*}
As a consequence, $d(u,v)/k_i^2=O(d(u,v)/m_i^{2\alpha})=O(d(u,v)^{1-2\alpha c/((2\alpha+1)(3\alpha+1))})$. Here $\alpha$ and $c$ were chosen such that the complicated exponent in the last bound is exactly $c$. This finishes the proof of the theorem.
\end{proof}

The simplest way to apply Theorem~\ref{ab} to obtain an \emph{unweighted} graph in which the graph distance approximates the Euclidean distance is to set $a=0$. After contracting the length 0 edges and shrinking the vertex set by a factor of $b>1$, we get an unweighted planar graph whose graph distance approximates the Euclidean distance well. Alternatively, we can set $a=2/3$, $b=4/3$ in Theorem~\ref{ab}. Blowing up the vertex set of the weighted grid obtained by a factor of $3/2$ and similarly increasing the edge weights, the resulting grid approximates Euclidean distance with weights $1$ and $2$. Subdividing the weight $2$ edges by inserting a vertex at its middle, one obtains a unweighted graph with the same asymptotic properties.

\begin{figure}[!htbp]
\centering
\includegraphics[width=0.62\textwidth]{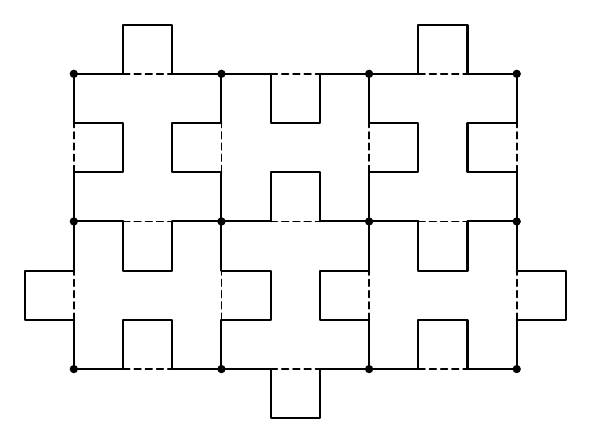}
\caption{}\label{fig:grid-gadget}
\end{figure}

\begin{proof}[Proof of Theorem~\ref{best}] For this proof we apply Theorem~\ref{ab} in slightly different way. We start with the unit grid graph with edge lengths $2/3$ and $10/9$ in which the graph distance approximates the Euclidean distance as stated in that theorem. We blow up the vertex set and the edge lengths by a factor of $9/2$ to obtain the weighted graph $H_0$, where all edge weights are $3$ or $5$. Now we consider the vertex set $V$ of a square grid of side length $3/2$ that contains all vertices of $H_0$. With a carefully selected subset of the grid edges, we form the graph $H$. Specifically, we include all solid edges depicted in Figure~\ref{fig:grid-gadget}. These edges provide a length 5 path between any pair of neighboring vertices in $H_0$ and also ensure that any vertex of $H$ is within distance 2 of a vertex of $H_0$. Then, for any pair of vertices in $H_0$ connected by a length 3 edge, we add a shortcut edge (depicted in Figure~\ref{fig:grid-gadget} as a dashed edge) to ensure that this pair of vertices is connected by length 3 path in $H$. The (unweighted) distance in the resulting graph $H$ between any two vertices in $H_0$ will agree with their weighted distance in $H_0$. This proves Theorem~\ref{best}.
\end{proof}

\smallskip

\textbf{Acknowledgement.} We used ChatGPT exclusively for beautifying the figures. We take full responsibility for our mistakes (if there are any).


\begin{thebibliography}{40}
\bibitem{AhBD20}
R.~Ahmed, G.~Bodwin, F.~Darabi Sahneh, K.~Hamm, M.~J.~Jebelli, S.~Kobourov, and R.~Spence. Graph spanners: A tutorial review. \emph{Computer Science Review} \textbf{37} (2020), 100253.

\bibitem{Be13}
I.~Benjamini: Euclidean vs. graph metric. In: \emph{Erd\H os Centennial,} Springer, Berlin, Heidelberg, 2013, 35--57.

\bibitem{Be26}
I.~Benjamini: Euclidean vs. graph metric: The fixed-source problem. arXiv:2606.13271 (2026).

\bibitem{BeSA17}
M.~Bern, J.~R.~Shewchuk, and N.~Amenta: Triangulations and mesh generation. In: \emph{Handbook of Discrete and Computational Geometry
(J.~E.~Goodman et al., eds), 3rd edition}, CRC Press, Boca Raton, FL, 2017, 763--785.

\bibitem{BrS94}
S.~C.~Brenner and L.~R.~Scott: \emph{The Mathematical Theory of Finite Element Methods}, Springer, New York, 1994.

\bibitem{BuI15}
D.~Burago and S.~Ivanov: Uniform approximation of metrics by graphs. \emph{Proceedings of the American Mathematical Society} \textbf{143} (3) (2015), 1241--1256.

\bibitem{CaC26}
Z.~Cai, K.~Chen, Sh.~Du, A.~Filtser, S.~Pettie, and D.~Skora:
The squishy grid problem. In: \emph{42nd International Symposium on Computational Geometry (SoCG 2026), LIPIcs} \textbf{367}, 2026, 27:1--27:16.
doi:10.4230/LIPIcs.SoCG.2026.27.
  
\bibitem{Ch86}
P.~Chew: There is a planar graph almost as good as the complete graph. In: \emph{ Proc. 2nd Ann. Symposium on Computational Geometry,} 1986, 169--177.

\bibitem{ChKNT08}
J. Chun, M.~Korman, M.~Nöllenburg, and T.~Tokuyama: Consistent digital rays. In: \emph{Proc. 22nd Ann. Symposium on Computational Geometry,} 2008, 355--364.

\bibitem{Ci78} P.~G.~Ciarlet: \emph{The \textit{Finite }Element Method for Elliptic Problems}, North-Holland, 1978.

\bibitem{CoR98}
J.~H.~Conway and C.~Radin: Quaquaversal tilings and rotations. \emph{Inventiones Mathematicae} \textbf{132} (1) (1998), 179--188.

\bibitem{deBKV02}
M.~de Berg, M.~J.~Katz, A.~F.~van der Stappen, and J.~M.~Vleugels: Realistic input models for geometric algorithms. \emph{Algorithmica} \textbf{34} (1) (2002), 81--97.

\bibitem{DoFS90}
D.~P.~Dobkin, S.~J.~Friedman, and K.~J. Supowit: Delaunay graphs are almost as good as complete graphs. \emph{Discrete \& Computational Geometry} \textbf{5} (1990), 399--407.

\bibitem{Gr08}
C.~M.~Gray: \emph{Algorithms for Fat Objects: Decompositions and Applications}. Ph.D. thesis, Technische Universiteit Eindhoven, 2008.

\bibitem{KeG92}
J.~M.~Keil and C.~A.~Gutwin: Classes of graphs which approximate the complete Euclidean graph. \emph{Discrete \& Computational Geometry} \textbf{7} (1992), 13--28.

\bibitem{Le59}
K.~Leichtweiss, \"{U}ber die affine Exzentrizit\"at konvexer K\"orper. \emph{Archiv der Mathematik} \textbf{10} (1959), 187--199.

\bibitem{PaPS90}
J.~Pach, R.~Pollack, and J.~Spencer: Graph distance and Euclidean distance on the grid. In: \emph{Topics in Combinatorics and Graph Theory (R.~Bodendiek et al., eds)}, Physica Verlag, Heidelberg, 1990, 555--559.

\bibitem{PeS89}
D.~Peleg and A.~A.~Sch\"affer: Graph spanners. \emph{Journal of Graph Theory} \textbf{13} (1) (1989), 99--116.

\bibitem{Ra94}
C.~Radin: The pinwheel tilings of the plane. \emph{Annals of Mathematics} \textbf{139} (3), (1994), 661--702.

\bibitem{Ra95}
C.~Radin: Symmetry and tilings. \emph{Notices of the AMS} \textbf{42} (1) (1995): 26--31.

\bibitem{RaS96}
C.~Radin and L.~Sadun: The isoperimetric problem for pinwheel tilings. \emph{Communications in Mathematical Physics} \textbf{177} (1) (1996), 255--263.

\bibitem{Sh14}
J.~R.~Shewchuk: \emph{Delaunay Mesh Generation}, CRC Press, 2014.

\bibitem{vdSO94}
A.~v. d.~Stappen and M.~H.~Overmars:
Motion planning amidst fat obstacles. In: \emph{Proc.~10th~ACM~Symposium on Computational Geometry} 1994, 31--40.

\bibitem{vdSHO93}
A.~F.~van der Stappen, D.~Halperin, and M.~H.~Overmars: The complexity of the free space for a robot moving amidst fat obstacles. \emph{Computational Geometry: Theory and Applications} \textbf{3} (6) (1993), 353--373.

\end{thebibliography}
\end{document}